\theoremstyle{plain}
\newtheorem{teo}{Theorem}[section]
\newtheorem{lemma}[teo]{Lemma}
\newtheorem{prop}[teo]{Proposition}
\newtheorem{cor}[teo]{Corollary}
\theoremstyle{definition}
\theoremstyle{remark}
\newtheorem{rem}[teo]{Remark}
\numberwithin{equation}{section}
\def\Ric{{\mathrm {Ric}}}
\def\div{\operatornamewithlimits{div}\nolimits}
\title[Bach-flat gradient steady Ricci solitons]{Bach-flat gradient steady Ricci solitons}
\date{\today}
\author[H.-D. Cao]{Huai-Dong Cao}
\address[Huai-Dong Cao]{Department of Mathematics\\ Lehigh University\\
Bethlehem, PA 18015} 
\email{huc2@lehigh.edu}
\author[G. Catino]{Giovanni Catino}
\address[Giovanni Catino]{SISSA -- International School for Advanced Studies, Via Bonomea 265, Trieste, Italy, 34136}
\email{catino@sissa.it}
\author[Q. Chen]{Qiang Chen}
\address[Qiang Chen]{Department of Mathematics\\ Lehigh University\\
Bethlehem, PA 18015} 
\email{qic208@lehigh.edu}
\author[C. Mantegazza]{Carlo Mantegazza}
\address[Carlo Mantegazza]{Scuola Normale Superiore di Pisa, P.za Cavalieri 7, Pisa, Italy, 56126}
\email{c.mantegazza@sns.it}
\author[L. Mazzieri]{Lorenzo Mazzieri}
\address[Lorenzo Mazzieri]{Scuola Normale Superiore di Pisa, P.za Cavalieri 7, Pisa, Italy, 56126}
\email{l.mazzieri@sns.it}
\begin{document}

\begin{abstract} In this paper we prove that any $n$-dimensional ($n\ge 4$) complete Bach-flat gradient steady Ricci soliton with 
positive Ricci curvature is isometric to the Bryant soliton. We also show that a three-dimensional gradient steady Ricci soliton with 
divergence-free Bach tensor is either flat or isometric to the Bryant soliton. In particular, these results improve the corresponding
classification theorems for complete locally conformally flat gradient steady Ricci solitons in \cite{caochen, mancat1}. 
\end{abstract}

\maketitle

\section{The results} 

A complete Riemannian metric $g_{ij}$ on a smooth manifold $M^n$
is called a {\it gradient Ricci soliton} if there exists a smooth
function $f$ on $M^n$ such that the Ricci tensor $R_{ij}$ of the
metric $g_{ij}$ satisfies the equation
$$R_{ij}+\nabla_i\nabla_jf=\rho \,g_{ij}$$
for some constant $\rho$. For $\rho=0$ the Ricci soliton is {\it
steady}, for $\rho>0$ it is {\it shrinking} and for $\rho<0$ {\it
expanding}. The function $f$ is called a {\it potential function}
of the gradient Ricci soliton. Clearly, when $f$ is a constant a
gradient Ricci soliton is simply a Einstein manifold. Thus Ricci
solitons are natural extensions of Einstein metrics.  Gradient
Ricci solitons play an important role in Hamilton's Ricci flow as
they correspond to self-similar solutions, and often arise as
singularity models. Therefore it is important to classify gradient
Ricci solitons or understand their geometry.

In this paper we shall focus our attention on  gradient steady
Ricci solitons $(M^n, g_{Ij}, f)$,  which are possible Type II singularity models in
the Ricci flow, satisfying the steady soliton equation 
\begin{equation}\label{steady}
R_{ij}+\nabla_i\nabla_jf=0.
\end{equation}

\medskip
It is now well-known that compact gradient steady solitons must be Ricci flat. 
In dimension $n=2$, Hamilton \cite{hamilton5} discovered the first example of a complete 
noncompact gradient steady soliton, defined on $\mathbb R^2$ and called the {\it cigar soliton}, 
where the metric is given explicitly by
$$ ds^2=\frac{dx^2 +dy^2}{1+x^2+y^2}.$$ 
The cigar soliton has positive curvature and is asymptotic to a cylinder of finite
circumference at infinity.  Furthermore, Hamilton  \cite{hamilton5} showed that {\sl the only
complete steady soliton on a two-dimensional manifold with
bounded (scalar) curvature $R$ which assumes its maximum
at an origin is, up to scaling,  the cigar soliton}. For $n\geq 3$, Robert Bryant proved that
{\sl there exists, up to scaling, a unique complete rotationally symmetric gradient Ricci
soliton on $\Bbb R^n$} (see, e.g.,  Chow et al. \cite{chowbookI}
for details). The Bryant soliton has positive sectional curvature, linear curvature decay,
and volume growth of geodesic balls of radius $r$ on the order of $r^{(n+1)/2}$. In the K\"ahler case,
the first author \cite{cao1} constructed a complete gradient steady K\"ahler-Ricci soliton on $\mathbb{C}^m$,
for $m\geq 2$, with positive sectional curvature and $U(m)$ symmetry. For additional examples, we refer the readers to the survey paper \cite{cao2} 
by the first author and the references therein. 

A well-known conjecture of Perelman \cite{perel1}, concerning gradient steady Ricci solitons, states that in dimension $n=3$ the Bryant 
soliton is the only complete noncompact ($\kappa$-noncollapsed) gradient steady soliton with positive curvature. Despite some recent important 
progresses, it remains a big challenge to prove this conjecture of Perelman. For $n\ge 4$, such a uniqueness result is not expected to hold, and it 
is desirable to find geometrically interesting conditions under which the uniqueness would hold. In \cite{caochen}, the first and third author proved 
that a complete noncompact $n$-dimensional ($n\ge 3$) locally conformally flat gradient  steady Ricci soliton with 
positive sectional curvature is isometric to the Bryant soliton. Moreover, they showed that a complete noncompact $n$-dimensional  locally conformally 
flat gradient  steady Ricci soliton is either flat  or isometric to the Bryant soliton. The same results for $n\ge 4$ were proved independently by the second 
and fourth author \cite{mancat1} by using different method.  More recently,  Brendle \cite{brendle2} (see also Proposition~\ref{bre} below) 
showed that for an $3$-dimensional gradient steady soliton $(M^3, g_{ij}, f)$ if the scalar curvature $R$ is positive and tends to zero at infinity, 
and that  $(M^3, g_{ij}, f)$ is asymptotic to the Bryant soliton in some suitable sense, then $(M^3, g_{ij}, f)$ is locally conformally flat, hence 
isometric to the Bryant soliton. When $n=4$, X. Chen and Y. Wang~\cite{chenwang} have proved that any $4$-dimensional complete half-conformally 
flat gradient steady Ricci soliton is either Ricci flat,  or locally conformally flat (hence isometric to the Bryant Soliton by~\cite {caochen} and~\cite{mancat1}).  
 
In this paper, motivated by the very recent work \cite{caochen2} on Bach-flat shrinking Ricci solitons, we study complete Bach-flat steady Ricci solitons.
A well-known fact is that if a $n$-dimensional manifold ($n\ge 4$) is either Einstein or locally conformally flat, then it is Bach-flat.  
In addition, in dimension $n=4$, if a 4-manifold is  {\it half-conformally flat } or {\it locally conformal to an Einstein}  4-manifold, then it is also Bach-flat. 

Let us recall that  on any $n$-dimensional manifold $(M^n, g_{ij})$ ($n\ge 4$) the Bach tensor,  introduced by R. Bach \cite{bach} in early 1920s' to study 
conformal relativity,  is defined by
\begin{equation}\label{bach}
B_{ij} =\frac 1 {n-3}\nabla^k\nabla^l W_{ikjl}+\frac 1 {n-2}
R_{kl}W{_i}{^k}{_j}^l.
\end{equation}
Here $W_{ikjl}$ is the Weyl tensor.  In terms of the Cotton tensor
 $$
 C_{ijk}=\nabla_i R_{jk}-\nabla_j R_{ik}-\frac {1}{2(n-1)} ( g_{jk} \nabla_i R - g_{ik} \nabla_j R)\,,
 $$
 we also have
\begin{equation}\label{bach2}
B_{ij}=\frac 1 {n-2} (\nabla_k C_{kij} + R_{kl}W{_i}{^k}{_j}^l)\,.
\end{equation}

Our first main result concerns the classification of Bach-flat gradient steady Ricci solitons:

\begin{teo}\label{teo1} For $n\geq 4$, let $(M^{n},g_{ij}, f)$ be a complete gradient steady Ricci soliton with positive Ricci curvature such 
that the scalar curvature $R$ attains its maximum at some interior point. If in addition $(M^{n},g_{ij}, f) $ is Bach-flat, then it is 
isometric to the Bryant soliton up to a scaling factor.
\end{teo}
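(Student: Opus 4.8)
The strategy is to show that Bach-flatness, together with positive Ricci curvature and the steady soliton equation, forces the Cotton tensor to vanish; then by a result of \cite{caochen} (or \cite{mancat1}) the soliton is locally conformally flat and hence isometric to the Bryant soliton. The main tool is the well-known fact that on a gradient Ricci soliton the covariant derivative of the Weyl tensor contracts against the gradient of the potential in a controlled way; more precisely, one has the identity
\begin{equation}\label{eq:plan1}
C_{ijk} = -\nabla_l f \, W_{ijkl}\,,
\end{equation}
(up to sign/normalization conventions), so that $W_{ijkl}\nabla^l f$ encodes the full Cotton tensor. I would first record this identity and the consequence $R_{kl}W{_i}{^k}{_j}^l \nabla^j f$-type contractions.

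\medskip
\textbf{Step 1: rewrite the Bach tensor on the soliton.} Starting from \eqref{bach2}, I would use $\nabla_k C_{kij}$ combined with the soliton identities (the contracted second Bianchi identity $\nabla^k R_{ik} = \tfrac12 \nabla_i R$, the soliton equation $R_{ij}=-\nabla_i\nabla_j f$, and the identity $R + |\nabla f|^2 = \text{const}$) to express $B_{ij}$ in terms of $W$ and first derivatives of $f$. The goal is an expression of the schematic form $(n-2)B_{ij} = (\text{divergence-type term in } C) + R_{kl}W{_i}{^k}{_j}^l$, which after using \eqref{eq:plan1} and a Bochner-type manipulation becomes something like $B_{ij} = \tfrac{1}{n-2}\big(\nabla_k(\nabla_l f\, W_{iklj}) + R_{kl}W{_i}{^k}{_j}^l\big)$ up to lower-order terms; this is the computational heart and the step most prone to sign and index bookkeeping.

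\medskip
\textbf{Step 2: contract with $\nabla^i f\,\nabla^j f$ and integrate.} Now set $B_{ij}=0$ and contract the resulting identity with the potential gradient, or integrate $B_{ij}$ against a suitable test quantity built from $f$ over geodesic balls. Because the Ricci curvature is positive and $R$ attains an interior maximum, the potential function $f$ is (after the standard analysis of steady solitons with positive Ricci curvature, cf. the discussion preceding the theorem) proper with a single critical point, $\nabla f\neq 0$ away from that point, and there are good estimates on $R$, $|\nabla f|$ and the volume growth. Using these, I expect the boundary terms in the integration by parts to vanish in the limit, leaving an integral of a manifestly nonnegative quantity — essentially $\int |C_{ijk}|^2$ or $\int |W_{ijkl}\nabla^l f|^2$ weighted by something positive — forced to be zero. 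This yields $W_{ijkl}\nabla^l f \equiv 0$, hence $C_{ijk}\equiv 0$ by \eqref{eq:plan1}.

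\medskip
\textbf{Step 3: from $C\equiv 0$ to local conformal flatness.} With the Cotton tensor vanishing and $\nabla f \neq 0$ on a dense open set, one argues that $W$ itself vanishes: the kernel condition $W_{ijkl}\nabla^l f=0$ together with $C=\div$-type relations and the algebraic symmetries of $W$ propagates to full vanishing of $W$ on an open dense set (this is exactly the mechanism used in \cite{caochen,mancat1}), hence everywhere by continuity. Then the soliton is locally conformally flat, and invoking the classification of complete locally conformally flat gradient steady solitons with positive (sectional, hence Ricci) curvature from \cite{caochen} or \cite{mancat1} identifies it with the Bryant soliton up to scaling. \textbf{The main obstacle} I anticipate is Step 2: controlling the boundary terms at infinity rigorously, which requires the precise decay/growth estimates for steady solitons with positive Ricci curvature and an interior maximum of $R$ — one must make sure the weighted integrand and the cutoff functions are chosen so that all error terms are integrable and vanish in the limit, rather than merely formally dropping them.
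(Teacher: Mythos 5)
There are two genuine gaps. First, your starting identity $C_{ijk}=-\nabla_l f\,W_{ijkl}$ is not a fact about gradient Ricci solitons: the correct identity is \eqref{DCW}, namely $C_{ijk}+W_{ijkl}\nabla_l f=D_{ijk}$, where $D_{ijk}$ is the explicit $3$-tensor built from $\Ric$, $R$ and $\nabla f$ recalled in Section 2, and $D_{ijk}$ does \emph{not} vanish on a general soliton. Asserting your identity is tantamount to assuming $D_{ijk}=0$ from the outset, which is precisely the content of the key lemma that Bach-flatness is used to prove. Consequently your Step 2 is aimed at the wrong quantity: when one writes $B_{ij}$ in the form \eqref{BD} and integrates $B_{ij}\nabla_i f\nabla_j f\,e^{f}$ over large balls (using the linear decay $f\le -c_1 r+c_2$ and Bishop volume growth to kill the boundary term), what comes out is $-\tfrac12\int_M|D_{ijk}|^2e^f$, not $\int|C_{ijk}|^2$ or $\int|W(\cdot,\cdot,\cdot,\nabla f)|^2$. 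So the correct conclusion of the integral argument is $D_{ijk}=0$, and only \emph{then} does $C_{ijk}=-W_{ijkl}\nabla_l f$ follow.

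Second, your Step 3 does not close for $n\ge 5$. Even granting $D_{ijk}=0$, the passage from $W_{ijkl}\nabla^l f=0$ (plus the symmetries of $W$ and the Cotton relations) to $W\equiv 0$ is a special four-dimensional algebraic phenomenon exploited in \cite{caochen2}; it fails in higher dimensions, which is exactly why the paper does not prove local conformal flatness for $n\ge 5$. Instead, the paper uses $D_{ijk}=0$ to show (via Proposition~\ref{propCC} and the Gauss equation) that the regular level sets of $f$ are totally umbilic Einstein hypersurfaces with $R$, $|\nabla f|$ and $H$ constant on them, so that off the unique critical point $O$ the metric is a warped product $dr^2+w(r)^2\bar g_E$; the fiber $\bar g_E$ is then identified with the round sphere by expanding $g$ in normal coordinates at $O$ and comparing with the warped-product form as $r\to 0$. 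Rotational symmetry, together with $M\cong\mathbb{R}^n$, then gives the Bryant soliton directly, bypassing local conformal flatness altogether. Your integration-by-parts strategy in Step 2 is the right mechanism, but it must be run with the tensor $D_{ijk}$, and the endgame for $n\ge5$ must go through the warped-product rigidity rather than through $W\equiv0$.
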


In dimension three we can prove a stronger result. To describe it, note that when $n=3$, while the expression of $B_{ij}$  in~\eqref{bach} 
is not well defined, the expression in~\eqref{bach2} makes perfect sense, so we can use it to define the Bach tensor in 3-D as
\begin{equation}\label{HC}
B_{ij}=\nabla_k C_{kij}\,.
\end{equation}
 
\begin{teo}\label{teo3D} Let $(M^{3},g_{ij}, f)$ be a three-dimensional complete gradient Ricci solitons with divergence-free Bach tensor (i.e., $\div B=0$). 
Then $(M^{3},g,f)$ is either Einstein or locally conformally flat.
\end{teo}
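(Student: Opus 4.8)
The plan is to exploit the fact that in dimension three the Weyl tensor vanishes identically, so that the full conformal information is carried by the Cotton tensor and the soliton is locally conformally flat precisely when $C_{ijk}\equiv 0$; thus the whole point is to prove $C_{ijk}\equiv 0$ (with the Einstein case appearing only as the degenerate, constant-curvature subcase, which in three dimensions is itself locally conformally flat). The first step is to record the explicit form of the Cotton tensor of a gradient Ricci soliton. Differentiating $R_{ij}+\nabla_i\nabla_jf=\rho\,g_{ij}$ and commuting covariant derivatives expresses $\nabla_iR_{jk}-\nabla_jR_{ik}$ through $\nabla f$ and the curvature tensor, and since in three dimensions the full Riemann tensor is reconstructed from the Ricci tensor, one obtains the algebraic identity
\[
C_{ijk}=\nabla_if\,R_{jk}-\nabla_jf\,R_{ik}-\tfrac{R}{2}\big(\nabla_if\,g_{jk}-\nabla_jf\,g_{ik}\big)+\tfrac14\big(g_{jk}\nabla_iR-g_{ik}\nabla_jR\big).
\]
One checks directly that the right-hand side is antisymmetric in $i,j$ and totally trace-free, and that it contracts to the clean self-contraction identity $C_{ijk}R_{jk}\nabla_if=\tfrac12|C|^2$ (using $R_{ik}\nabla^kf=\tfrac12\nabla_iR$). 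This identity is the algebraic engine of the whole argument.

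Next I would set up a weighted integral argument with respect to the natural measure $e^{-f}\,dV$, computing the quantity $\int_M B(\nabla f,\nabla f)\,e^{-f}\,dV$ in two different ways. On one hand, writing $B(\nabla f,\nabla f)=(\nabla^kC_{kij})\nabla^if\nabla^jf$ and integrating by parts once — using only the definition $B=\operatorname{div}C$, the antisymmetry of $C$ in its first two indices (which annihilates the $\nabla^kf$ term), and the soliton equation $\nabla^i\nabla^jf=\rho\,g^{ij}-R^{ij}$ to convert the surviving Hessians into Ricci contractions — the self-contraction identity above yields $\int_M B(\nabla f,\nabla f)\,e^{-f}=-\tfrac12\int_M|C|^2\,e^{-f}$. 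On the other hand, one integration by parts gives $\int_M B(\nabla f,\nabla f)\,e^{-f}=\int_M B_{ij}\nabla^i\nabla^jf\,e^{-f}+\int_M\langle\operatorname{div}B,\nabla f\rangle\,e^{-f}$; here the hypothesis $\div B=0$ kills the last term, and trace-freeness of $B$ together with the soliton equation turns $B_{ij}\nabla^i\nabla^jf$ into $-B_{ij}R^{ij}$, whose weighted integral vanishes by the Bianchi-type identity $\int_M B_{ij}R^{ij}\,e^{-f}=0$ (itself a consequence of $B=\operatorname{div}C$ and one more integration by parts). Equating the two evaluations forces $\int_M|C|^2\,e^{-f}=0$, hence $C_{ijk}\equiv 0$, i.e. the soliton is locally conformally flat.

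The main obstacle is analytic rather than algebraic: justifying the integrations by parts on a complete, possibly noncompact manifold, i.e. showing that the boundary fluxes over geodesic spheres $\partial B_r$ vanish as $r\to\infty$. This is delicate precisely because the theorem allows all three types (shrinking, steady, expanding), so the weight $e^{-f}$ and the weighted volume behave very differently from case to case — for shrinkers $e^{-f}$ decays and the weighted volume is finite, but for steady and expanding solitons no such decay is automatic. The resolution would rely on soliton-specific growth estimates for the potential $f$, for $|\nabla f|$, and for the curvature (together with suitable cutoff functions whose error terms are shown to be negligible), and it is exactly in the regime where these estimates degenerate that one is forced into the rigid, Einstein alternative. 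Once $C\equiv 0$ is established, local conformal flatness is immediate, completing the dichotomy.
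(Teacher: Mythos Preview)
Your algebraic engine is exactly right: the soliton identity
\[
C_{ijk}R^{jk}\nabla^i f=\tfrac12|C|^2
\]
is the heart of the matter, and you have it. But everything you do after that is unnecessary and, as you yourself flag, analytically problematic. The paper's proof is entirely \emph{pointwise}: one computes, on any three-manifold (no soliton structure used here), the divergence of the Bach tensor directly and finds
\[
\nabla^j B_{ij}=-C_{ijk}R^{jk}.
\]
This follows from writing $B_{ij}=\nabla^k(\nabla_kA_{ij}-\nabla_iA_{kj})$, taking a divergence, commuting derivatives, and using that in dimension three the Riemann tensor is expressible through the Schouten tensor. Contracting this pointwise identity with $\nabla f$ and invoking your algebraic engine gives
\[
\langle\operatorname{div}B,\nabla f\rangle=-\tfrac12|C|^2
\]
at every point. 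The hypothesis $\operatorname{div}B=0$ then forces $C\equiv 0$ immediately, with no integration, no cutoff functions, no boundary terms, and no case split on the sign of $\rho$.

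Your weighted-integral route, by contrast, requires controlling boundary fluxes for all three soliton types simultaneously; for steady and expanding solitons with no curvature assumptions there are no known growth estimates strong enough to do this, and your claimed auxiliary identity $\int_M B_{ij}R^{ij}e^{-f}=0$ would itself need such control. So while your algebra is correct, the analytic packaging creates a genuine gap that the pointwise argument completely sidesteps. The moral: once you have $C_{ijk}R^{jk}\nabla^i f=\tfrac12|C|^2$, look for a pointwise expression for $\operatorname{div}B$ rather than reaching for an integral identity.
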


Using the 3-D classification of locally conformally flat gradient steady Ricci solitons (see~\cite{caochen}), we have:

\begin{cor} A complete three-dimensional gradient  steady Ricci soliton $(M^{3},g_{ij}, f)$ with divergence-free Bach tensor is either flat or 
isometric to the Bryant soliton (up to a scaling factor).
\end{cor}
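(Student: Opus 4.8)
The plan is to deduce this directly from Theorem~\ref{teo3D} together with the three-dimensional classification of locally conformally flat gradient steady solitons in \cite{caochen}; the statement is a formal corollary and carries no genuinely new content. First I would apply Theorem~\ref{teo3D} to the complete steady soliton $(M^{3},g_{ij},f)$: since $\div B=0$ by hypothesis, Theorem~\ref{teo3D} yields the dichotomy that $(M^{3},g_{ij},f)$ is either Einstein or locally conformally flat.

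If $(M^{3},g_{ij},f)$ is locally conformally flat, I would simply invoke \cite{caochen}: a complete locally conformally flat three-dimensional gradient steady Ricci soliton is either flat or isometric to the Bryant soliton (up to a scaling factor). If instead $(M^{3},g_{ij},f)$ is Einstein, note that in dimension three an Einstein metric has constant sectional curvature, so in particular $R$ is constant; then every term of the Cotton tensor $C_{ijk}=\nabla_i R_{jk}-\nabla_j R_{ik}-\tfrac1{2(n-1)}(g_{jk}\nabla_i R-g_{ik}\nabla_j R)$ vanishes, and, since $W\equiv 0$ in dimension three, the soliton is locally conformally flat. Hence the Einstein alternative also falls under \cite{caochen}, and since the Bryant soliton does not have constant sectional curvature, an Einstein complete steady soliton in dimension three must in fact be flat. (Alternatively one could see this last point directly: tracing \eqref{steady} gives $\nabla_i\nabla_j f=-\tfrac{R}{3}g_{ij}$, and combining $\Delta f=-R$, the steady identity $R+|\nabla f|^2=\mathrm{const}$, and the Bochner formula for $|\nabla f|^2$ forces $R\equiv 0$.)

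Combining the two cases gives the corollary. I do not expect any real obstacle at this stage: all the substance is contained in Theorem~\ref{teo3D} and in the cited classification of \cite{caochen}, and the only point worth making explicit is that three-dimensional Einstein manifolds are automatically locally conformally flat, so the Einstein case is not actually a separate possibility once one passes through \cite{caochen}.
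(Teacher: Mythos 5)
Your proposal is correct and follows exactly the paper's route: apply Theorem~\ref{teo3D} to get the Einstein/locally conformally flat dichotomy and then invoke the classification of three-dimensional locally conformally flat gradient steady solitons from \cite{caochen}, observing that the Einstein alternative collapses into the flat case. The paper leaves the Einstein case implicit, so your explicit check (three-dimensional Einstein metrics are locally conformally flat, and a complete Einstein steady soliton is Ricci flat, hence flat in dimension three) is a welcome, and correct, addition rather than a deviation.
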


\begin{rem} The assumption of Bach-flat or  divergence-free Bach is, at least a priori, weaker than that of locally conformally flat.  
Thus, Corollary 1.3 could be very helpful in proving Perelman's conjecture stated before. 

\end{rem}

\medskip
Finally, in Section 5, we present some applications and discuss Bach-flat gradient expanding solitons.  
\medskip

\noindent {\bf Acknowledements.} Part of the work was carried out while the first and the third authors were visiting the Mathematical Sciences Center of Tsinghua University in Beijing. 
They would like to thank the Center for their hospitality and support. The research of the first author was partially supported by NSF grant DMS-0909581.  

The second, the fourth and the fifth authors are partially supported by the Italian project FIRB--IDEAS ``Analysis and Beyond''.

\medskip

\section{Background material} 

In this section, we recall some background material needed in the proof of our main theorems. 

Recall that on any $n$-dimensional Riemannian
manifold $(M^n, g_{ij} )$ ($n\ge 3 $),  the Weyl curvature tensor
is given by

\begin{align*}
W_{ijkl}  = & R_{ijkl} - \frac{1}{n-2}(g_{ik}R_{jl}-g_{il}R_{jk}-g_{jk}R_{il}+g_{jl}R_{ik})\\
& + \frac{R}{(n-1)(n-2)} (g_{ik}g_{jl}-g_{il}g_{jk}), \\
\end{align*}
and  the Cotton tensor by
 $$C_{ijk}=\nabla_i R_{jk}-\nabla_j R_{ik}-\frac {1}{2(n-1)} ( g_{jk} \nabla_i R - g_{ik} \nabla_j R).$$
In terms of the Schouten tensor
\begin{equation}\label{schouten}
A_{ij}= R_{ij}- \frac{R}{2(n-1)}g_{ij}, 
\end{equation}   
we have 
\begin{equation}\label{WRA}
W_{ijkl}=R_{ijkl} - \frac{1}{n-2} (g_{ik}A_{jl}-g_{il}A_{jk}-g_{jk}A_{il}+g_{jl}A_{ik}), 
\end{equation}
and  
\begin{equation}\label{CA}
C_{ijk}=\nabla_i A_{jk}-\nabla_j A_{ik}.
\end{equation}
 
It is well known that, for $n=3$, $W_{ijkl}$ vanishes identically, while $C_{ijk}=0$ if and only if
$(M^3, g_{ij})$ is locally conformally flat; for $n\ge 4$,
$W_{ijkl}=0$ if and only if  $(M^n, g_{ij})$ is locally
conformally flat.  Moreover, for $n\ge 4$, the Cotton tensor
$C_{ijk}$ is, up to a constant factor, the divergence of the Weyl tensor:
\begin{equation}\label{DW}
C_{ijk}=-\frac{n-2}{n-3} \nabla_l W_{ijkl}.
\end{equation} 
We remark that  $C_{ijk}$ is skew-symmetric in the
first two indices and trace-free in any two indices:
\begin{equation}\label{SC}
C_{ijk}=-C_{jik} \quad \mbox{and} \quad g^{ij}C_{ijk}=g^{ik}C_{ijk}=0\,. 
\end{equation}
Moreover, for $n\ge 4$, the Bach tensor is defined by
$$
B_{ij}=\frac 1 {n-3}\nabla^k\nabla^l W_{ikjl}+\frac 1 {n-2}
R_{kl}W{_i}{^k}{_j}^l.  
$$
By~\eqref{DW}, we have an equivalent expression of the Bach tensor:
\begin{equation}\label{BC}
B_{ij}=\frac 1 {n-2} (\nabla_k C_{kij} + R_{kl}W{_i}{^k}{_j}^l). 
\end{equation}
Next we recall some  basic facts about complete gradient steady
Ricci solitons satisfying Eq. (1.1) .

\begin{lemma} {\bf (Hamilton \cite{hamilton9})} Let $(M^n, g_{ij}, f)$
be a complete gradient steady Ricci soliton satisfying Eq.
(1.1). Then we have
\begin{equation}\label{DR}
\nabla_iR=2R_{ij}\nabla_jf,  
\end{equation}
and
$$R+|\nabla f|^2=C_0 $$ for some constant $C_0$. Here $R$
denotes the scalar curvature.
\end{lemma}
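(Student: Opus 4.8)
The plan is to derive both identities directly from the steady soliton equation \eqref{steady}, the twice-contracted second Bianchi identity $\nabla_j R_{ij} = \tfrac12 \nabla_i R$, and the commutation of third covariant derivatives of a function.

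First I would trace \eqref{steady} to obtain $R + \Delta f = 0$. Next, taking the divergence of \eqref{steady} gives $\nabla_j R_{ij} + \nabla_j\nabla_i\nabla_j f = 0$. The only nontrivial step is to rewrite the last term: since the Hessian of $f$ is symmetric, $\nabla_j\nabla_i\nabla_j f = \Delta(\nabla_i f) = \nabla_i(\Delta f) + R_{ij}\nabla_j f$, which by $\Delta f = -R$ equals $-\nabla_i R + R_{ij}\nabla_j f$. Substituting this together with $\nabla_j R_{ij} = \tfrac12\nabla_i R$ into the divergence identity yields $-\tfrac12\nabla_i R + R_{ij}\nabla_j f = 0$, that is, $\nabla_i R = 2 R_{ij}\nabla_j f$, which is exactly \eqref{DR}.

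For the second identity I would differentiate the function $R + |\nabla f|^2$ and use \eqref{steady} once more:
$$\nabla_i\big(R + |\nabla f|^2\big) = \nabla_i R + 2\nabla_j f\,\nabla_i\nabla_j f = \nabla_i R - 2 R_{ij}\nabla_j f = 0,$$
where the last equality is precisely \eqref{DR}. Since $M^n$ is connected, $R + |\nabla f|^2$ is constant, say equal to $C_0$. I do not expect any genuine obstacle here: the argument is a short tensor computation, the only delicate point being the sign of the curvature term produced by commuting covariant derivatives; the statement itself is the classical first-order soliton identity of Hamilton, and the same manipulation works verbatim in every dimension $n$.
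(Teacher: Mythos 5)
Your proof is correct and is precisely the classical argument of Hamilton that the paper cites without reproducing: trace and diverge the soliton equation, use the contracted second Bianchi identity and the commutation formula $\Delta\nabla_i f=\nabla_i\Delta f+R_{ij}\nabla_j f$ to get \eqref{DR}, then differentiate $R+|\nabla f|^2$ to see it is constant. The signs in the curvature commutation are handled correctly, so there is nothing to add.
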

\begin{lemma} Let $(M^n, g_{ij}, f)$ be a complete gradient steady soliton. Then it has nonnegative
scalar curvature $R\ge 0$.
\end{lemma}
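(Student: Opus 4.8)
The plan is to reduce the lemma to a maximum principle for the drift Laplacian $\Delta_f := \Delta - \langle\nabla f,\nabla\,\cdot\,\rangle$ applied to the scalar curvature $R$. The key is the pointwise identity, valid on every complete gradient steady soliton,
\begin{equation*}
\Delta R - \langle\nabla f,\nabla R\rangle = -2\,|R_{ij}|^{2},
\end{equation*}
which I would obtain by applying the Bochner formula to $|\nabla f|^{2}$, substituting the soliton equation \eqref{steady} (so that $|\nabla^{2}f|^{2}=|R_{ij}|^{2}$ and $\Delta f=-R$) together with Hamilton's identities \eqref{DR} and $R+|\nabla f|^{2}=C_{0}$; differentiating the latter relation twice converts $\Delta|\nabla f|^{2}$ into $-\Delta R$ and produces the identity. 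Since $|R_{ij}|^{2}\ge R^{2}/n$ by Cauchy--Schwarz, this gives $\Delta_f R \le -\tfrac{2}{n}R^{2}\le 0$, so $R$ is a supersolution of $\Delta_f$; moreover $R\le C_{0}$ because $|\nabla f|^{2}\ge 0$.

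If $M$ is compact the conclusion is immediate: at an interior minimum $p$ of $R$ we have $\Delta_f R(p)\ge 0$, so the identity forces $R_{ij}(p)=0$ and hence $R(p)=0$; therefore $R\ge\min_M R=0$ (and then $\int_M R=\int_M(-\Delta f)=0$ yields $R\equiv 0$, recovering Ricci-flatness). In the noncompact case the real difficulty is that we have no \emph{a priori} bound on the curvature, so the Omori--Yau maximum principle is not directly available. I would proceed as follows. Setting $u:=-R$, the identity above reads $\Delta_f u \ge \tfrac{2}{n}u^{2}$, a superlinear differential inequality (which by itself, Keller--Osserman style, already tends to rule out $u$ being unbounded above). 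If one has a version of the weak maximum principle for $\Delta_f$ on the soliton — choosing a sequence $x_k$ with $u(x_k)\to\sup_M u$ and $\limsup_k\Delta_f u(x_k)\le 0$ — then $\tfrac{2}{n}(\sup_M u)^{2}\le 0$, i.e. $\sup_M u\le 0$, which is exactly $R\ge 0$. The required maximum principle at infinity can be supplied either by the fact that the Omori--Yau (hence weak) maximum principle for the $f$-Laplacian holds on any complete gradient Ricci soliton — using here that the Bakry--\'Emery tensor $R_{ij}+\nabla_i\nabla_j f$ vanishes — or, geometrically, by noting that the flow $\varphi_t$ of $\nabla f$ turns the soliton into an \emph{eternal}, in particular ancient, solution $g(t)=\varphi_t^{*}g$ of the Ricci flow, and invoking B.-L.\ Chen's theorem that complete ancient solutions have $R\ge 0$.

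I expect the main obstacle to be precisely this passage to infinity: upgrading the pointwise supersolution property of $R$ to the global inequality $R\ge 0$ without assuming bounded curvature or controlled volume growth. Both routes sketched above localize this issue — one through cutoff functions adapted to the weighted measure $e^{-f}\,dV$, the other through Chen's localized maximum principle along the Ricci flow — and that localization is where the substantive work lies; the Bochner identity and the compact case are routine.
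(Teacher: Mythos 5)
The paper offers no proof of this lemma at all: it simply remarks that the statement is a special case of B.-L.~Chen's theorem that any complete ancient solution of the Ricci flow has $R\ge 0$, a steady soliton being eternal and hence ancient. That is precisely the second of your two routes, so your proposal subsumes the paper's argument. Your Bochner identity $\Delta R-\langle\nabla f,\nabla R\rangle=-2|R_{ij}|^2$ is correct, the compact case is handled correctly, and your diagnosis that the entire difficulty is the passage to infinity without a priori curvature or volume control is exactly right. Two caveats are worth recording. First, your primary route is circular as justified: the weak or Omori--Yau maximum principle for $\Delta_f$ does \emph{not} follow from the vanishing of $\mathrm{Ric}+\nabla^2 f$ alone --- the Wei--Wylie comparison theory for the Bakry--\'Emery tensor additionally requires control of $|\nabla f|$ or of $f$ along geodesics, and on a steady soliton $|\nabla f|^2=C_0-R$ is bounded precisely when $R$ is bounded below, i.e.\ when the conclusion is already in hand. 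The principle \emph{is} valid on every complete gradient Ricci soliton (Pigola--Rimoldi--Setti), but by an argument that must be cited or reproduced; ``$\mathrm{Ric}_f=0$'' is not by itself a proof. Second, even the Chen route silently uses that $\nabla f$ is a complete vector field, so that the soliton actually integrates to an ancient flow; this is a theorem of Z.-H.~Zhang and again is not free for the same reason ($|\nabla f|$ is not obviously bounded). With those two references supplied, your second route is exactly the paper's proof; your first route, as written, has a gap at the one step you yourself identified as the substantive one.
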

Lemma 2.2 is a special case of a more general result of B.-L. Chen
\cite{chen2} which states that $R\ge 0$ for any ancient solution
to the Ricci flow. 

Note that, by Lemma 2.2, the constant $C_0$ in Lemma 2.1 must be positive for any non-trivial gradient steady soliton. Hence, by scaling the metric $g$, we can normalize it to be one so that
\begin{equation}\label{Sum}
R+|\nabla f|^2=1. 
\end{equation}

\begin{lemma} {\bf (Cao-Chen \cite{caochen})}\label{growth}  Let $(M^n, g_{ij}, f)$ be a complete noncompact
gradient steady soliton with positive Ricci curvature $Rc>0$. Assume the scalar curvature $R$ attains its maximum at some origin
$O$. Then, there exist some constants $0<c_1\leq 1$ and $c_2>0$  such that the potential function $f$ satisfies the estimates
\begin{equation}\label{potential}
c_1r(x)-c_2 \leq -f(x) \leq  r(x) + |f(O)|, 
\end{equation}
where $r(x)=d(O, x)$ is the distance function from $O$.
In particular, $f$ is a strictly concave exhaustion function achieving its maximum at the only critical point  $O$, and the underlying manifold $M^n$ is diffeomorphic to $\mathbb{R}^n$.
\end{lemma}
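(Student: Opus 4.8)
The plan is to run everything off the facts already in hand — the soliton equation $\nabla_i\nabla_jf=-R_{ij}$, Hamilton's identities \eqref{DR} and \eqref{Sum}, and $R\ge0$ — together with the strict concavity of $f$ that these force.

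First I would record two elementary consequences. Since $R$ has an interior maximum at $O$ we have $\nabla R(O)=0$, so \eqref{DR} gives $R_{ij}(O)\nabla_jf(O)=0$, and because $R_{ij}(O)>0$ is invertible this forces $\nabla f(O)=0$. Next, $\nabla_i\nabla_jf=-R_{ij}<0$ everywhere, so $f$ is strictly concave along every geodesic; hence $O$ is its only critical point and a strict maximum. The upper bound in \eqref{potential} is then immediate: $|\nabla f|=\sqrt{1-R}\le1$ by \eqref{Sum} and $R\ge0$, so integrating $df$ along a unit-speed minimizing geodesic from $O$ to $x$ gives $|f(x)-f(O)|\le r(x)$, i.e. $-f(x)\le r(x)+|f(O)|$.

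The heart of the matter is the linear lower bound, and the idea I would use is to localize the positivity of Ricci to a small ball around $O$, where $Rc$ is uniformly positive definite, and then propagate the resulting decay of $f$ outward by concavity. Fix $\epsilon>0$ small enough that $R_{ij}\ge\mu\,g_{ij}$ on $\overline{B_\epsilon(O)}$ for some $\mu>0$ (possible since $Rc(O)>0$, by continuity and compactness) and, shrinking $\epsilon$ if needed, so that $\mu\epsilon\le1$. Given $x$ with $r(x)>\epsilon$, let $\gamma\colon[0,r(x)]\to M$ be a unit-speed minimizing geodesic from $O$ to $x$ and set $h(s)=f(\gamma(s))$. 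Then $h'(0)=0$ and $h''(s)=-Rc(\gamma'(s),\gamma'(s))$, so $h'$ is nonincreasing and $h'(s)=-\int_0^s Rc(\gamma',\gamma')\,du$. Since $\gamma|_{[0,\epsilon]}$ lies in $\overline{B_\epsilon(O)}$, where $Rc(\gamma',\gamma')\ge\mu$, we get $-h'(\epsilon)\ge\mu\epsilon$, hence $-h'(s)\ge\mu\epsilon$ for all $s\ge\epsilon$; integrating from $\epsilon$ to $r(x)$ and using $|h(\epsilon)-f(O)|\le\epsilon$ yields $-f(x)\ge\mu\epsilon\,r(x)-c_2$ with $c_2:=\mu\epsilon^2+|f(O)|+\epsilon$, a constant independent of $x$ and of $\gamma$. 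Together with the (trivial) case $r(x)\le\epsilon$ this is \eqref{potential} with $c_1=\mu\epsilon\in(0,1]$. Finally, \eqref{potential} shows $-f$ is a proper exhaustion function, and since $f$ is strictly concave with a unique, nondegenerate critical point, standard Morse theory (or the gradient flow of $-f$, which is complete because $|\nabla f|\le1$ and $-f$ is proper) gives $M^n\cong\mathbb R^n$.

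The step I expect to be delicate is exactly the uniformity of the lower bound over all minimizing geodesics from $O$: a naive estimate produces an initial slope $-h'(\epsilon)$ that a priori depends on $\gamma$ and could degenerate. The device above — invoking $Rc>0$ only on a fixed small ball, where Ricci is bounded below by a positive constant independent of direction, and then using monotonicity of $h'$ to retain that slope for all larger $s$ — removes this dependence, and it crucially requires no control on the behavior of $R$ or $Rc$ near infinity (which would be hard, since by Bonnet--Myers the noncompactness forces $Rc$ to degenerate at infinity).
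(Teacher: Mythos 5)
Your proof is correct, and it is essentially the standard argument of Cao--Chen that this paper merely cites (the paper itself gives no proof of Lemma~\ref{growth}): $\nabla f(O)=0$ from \eqref{DR} and the invertibility of $Rc(O)$, the upper bound from $|\nabla f|\le 1$, the lower bound by integrating $h''=-Rc(\gamma',\gamma')$ twice along minimizing geodesics using a uniform positive lower bound for $Rc$ on a fixed small ball about $O$, and the diffeomorphism to $\mathbb{R}^n$ from properness plus strict concavity. The point you flag as delicate --- getting a slope bound independent of the geodesic by localizing $Rc\ge\mu g$ to $\overline{B_\epsilon(O)}$ and then invoking monotonicity of $h'$ --- is exactly the device used in \cite{caochen}, so there is nothing to add.
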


Finally, in the spirit of~\cite{caochen2}, we recall the covariant 3-tensor $D_{ijk}$, 
\begin{align*}
 D_{ijk} = & \frac{1}{n-2}(R_{jk} \nabla_i f- R_{ik} \nabla_j f) +\frac{1}{2(n-1)(n-2)} (g_{jk}\nabla_i R-g_{ik} \nabla_j R)\\
& + \frac{R}{(n-1)(n-2)} (g_{ik} \nabla_j f - g_{jk}\nabla_i f)\,,
\end{align*}
which was first introduced in \cite{caochen} and played the key role
in classifying locally conformally flat gradient steady Ricci solitons  \cite{caochen} and Bach-flat gradient shrinking Ricci solitons \cite{caochen2}.
Note that, $D_{ijk}$ has the same symmetry properties as the Cotton tensor: 
\begin{equation}\label{SD}
D_{ijk}=-D_{jik} \quad \mbox{and} \quad g^{ij}D_{ijk}=g^{ik}D_{ijk}=0\,. 
\end{equation}

\begin{lemma} {\bf (Cao-Chen \cite{caochen, caochen2})} 
Let $(M^n, g_{ij}, f)$ ($n\ge 3$) be a complete gradient steady soliton. Then $D_{ijk}$ is related to the Cotton tensor
$C_{ijk}$ and the Weyl tensor $W_{ijkl}$ by
\begin{equation}\label{DCW}
D_{ijk}=C_{ijk}+W_{ijkl}\nabla_l f \,.
\end{equation}
\end{lemma}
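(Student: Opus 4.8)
The plan is to verify \eqref{DCW} by a direct tensor computation assembling four facts already recorded in the excerpt: the Weyl--Schouten decomposition \eqref{WRA}, the Cotton--Schouten identity \eqref{CA}, Hamilton's identity \eqref{DR} from Lemma~2.1, and the standard commutation identity for gradient steady solitons. I would first establish the last of these. Differentiating the steady soliton equation \eqref{steady} gives $\nabla_i R_{jk}=-\nabla_i\nabla_j\nabla_k f$, and antisymmetrizing in $i,j$ while applying the Ricci identity to the third covariant derivative of $f$ yields
\[
\nabla_i R_{jk}-\nabla_j R_{ik}=-R_{ijkl}\nabla^l f .
\]
Tracing this over $j,k$ and invoking the contracted second Bianchi identity reproduces \eqref{DR}, which simultaneously pins down the sign relative to the curvature conventions used in the paper.

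Next I would rewrite the Cotton tensor: by \eqref{CA} and the definition \eqref{schouten} of the Schouten tensor,
\[
C_{ijk}=\nabla_i A_{jk}-\nabla_j A_{ik}
=\bigl(\nabla_i R_{jk}-\nabla_j R_{ik}\bigr)-\frac{1}{2(n-1)}\bigl(g_{jk}\nabla_i R-g_{ik}\nabla_j R\bigr),
\]
so the commutation identity turns this into $C_{ijk}=-R_{ijkl}\nabla^l f-\frac{1}{2(n-1)}(g_{jk}\nabla_i R-g_{ik}\nabla_j R)$. Separately, contracting \eqref{WRA} with $\nabla^l f$ gives $W_{ijkl}\nabla^l f=R_{ijkl}\nabla^l f-\frac{1}{n-2}(g_{ik}A_{jl}\nabla^l f-A_{jk}\nabla_i f-g_{jk}A_{il}\nabla^l f+A_{ik}\nabla_j f)$. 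Adding the two, the full curvature terms $R_{ijkl}\nabla^l f$ cancel, so $C_{ijk}+W_{ijkl}\nabla^l f$ is expressed purely through $\nabla R$, through $A_{jk}\nabla_i f$--type terms, and through $A_{jl}\nabla^l f$--type terms. For $n=3$ the same computation applies verbatim, reading \eqref{WRA} as the identity expressing $R_{ijkl}$ through the Schouten tensor and using $W_{ijkl}\nabla^l f=0$.

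The final step is bookkeeping. Using \eqref{DR} in the form $A_{jl}\nabla^l f=\tfrac12\nabla_j R-\tfrac{R}{2(n-1)}\nabla_j f$ (and its analogue with $i$), and expanding $A_{jk}\nabla_i f$, $A_{ik}\nabla_j f$ back in terms of $R_{jk}$, $R_{ik}$ via \eqref{schouten}, one collects the coefficients of the three independent tensor structures $g_{jk}\nabla_i R-g_{ik}\nabla_j R$, $R_{jk}\nabla_i f-R_{ik}\nabla_j f$, and $g_{ik}\nabla_j f-g_{jk}\nabla_i f$. A short arithmetic check --- e.g.\ $-\tfrac{1}{2(n-1)}+\tfrac{1}{2(n-2)}=\tfrac{1}{2(n-1)(n-2)}$ for the $\nabla R$ coefficient --- shows the result is exactly the three groups of terms defining $D_{ijk}$.

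I do not expect a genuine obstacle: the statement is an algebraic identity and every ingredient is already in hand. The only points demanding care are the sign in the commutation identity of the first step (the internal consistency check against \eqref{DR} being the safeguard) and keeping the trace terms straight when passing between $R_{ij}$ and $A_{ij}$ --- a misplaced factor of $\tfrac{1}{2(n-1)}$ anywhere would spoil the final cancellation.
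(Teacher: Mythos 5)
Your computation is correct: the commutation identity $\nabla_i R_{jk}-\nabla_j R_{ik}=-R_{ijkl}\nabla^l f$ obtained from \eqref{steady}, combined with the Weyl--Schouten decomposition \eqref{WRA} and Hamilton's identity \eqref{DR}, does cancel the full curvature term and the remaining coefficients (e.g.\ $-\tfrac{1}{2(n-1)}+\tfrac{1}{2(n-2)}=\tfrac{1}{2(n-1)(n-2)}$) assemble exactly into $D_{ijk}$. The paper itself offers no proof --- the lemma is quoted from \cite{caochen, caochen2} --- and your argument is precisely the derivation given in those references, so there is nothing to correct.
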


On the other hand,  for any gradient Ricci soliton, it turns out that the Bach tensor $B_{ij}$ can be expressed in terms of $D_{ijk}$ and the 
Cotton tensor $C_{ijk}$ (for the proof see~\cite{caochen2}): 
\begin{equation}\label{BD}
B_{ij} =-\frac 1{n-2}(\nabla_k D_{ikj}+\frac{n-3}{n-2}C_{jli}\nabla_l f)\,.
\end{equation}

Moreover, we recall that the norm of $D_{ijk}$ is linked to the geometry of level surfaces of the potential function
$f$ by the following: 

\begin{lemma}  {\bf (Cao-Chen \cite{caochen, caochen2})}  Let $(M^n, g_{ij}, f)$ ($n\ge 3$) be an $n$-dimensional
gradient steady Ricci soliton. Then, at any point $p\in
M^n$ where $\nabla f(p)\ne 0$, we have
\begin{equation}\label{D2}
|D_{ijk}|^2=\frac{2|\nabla f|^4}{(n-2)^2} |h_{ab}-\frac{H}{n-1}g_{ab}|^2 +\frac{1}{2(n-1)(n-2)}
|\nabla_a R|^2
\end{equation}
where $h_{ab}$ and $H$ are the second fundamental form and the mean curvature of the level surface
$\Sigma=\{f=f(p)\}$, and $g_{ab}$ is the induced metric on $\Sigma$.
\end{lemma}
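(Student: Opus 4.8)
The plan is to prove the identity by a direct pointwise computation in a frame adapted to the level surface $\Sigma=\{f=f(p)\}$. Fix $p$ with $\nabla f(p)\neq 0$ and pick a local orthonormal frame $\{e_1,\dots,e_n\}$ with $e_n=\nabla f/|\nabla f|$; thus $\nabla_n f=|\nabla f|$ and $\nabla_a f=0$ for $a=1,\dots,n-1$, where from now on the indices $a,b,c$ run over directions tangent to $\Sigma$. Two ingredients will be used throughout. First, the steady soliton equation \eqref{steady} gives $\nabla_i\nabla_j f=-R_{ij}$, so that the second fundamental form of $\Sigma$ with respect to $e_n$ and its mean curvature satisfy $h_{ab}=-R_{ab}/|\nabla f|$ and $H=-(R-R_{nn})/|\nabla f|$; equivalently $R_{ab}-\frac{R-R_{nn}}{n-1}g_{ab}=-|\nabla f|\,(h_{ab}-\frac{H}{n-1}g_{ab})$. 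Second, Hamilton's identity \eqref{DR} reads $\nabla_a R=2R_{an}|\nabla f|$ in this frame, for every index $a$ (in particular $\nabla_n R=2R_{nn}|\nabla f|$).

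Next I would evaluate the components of $D_{ijk}$, organizing them by how many indices equal $n$ and using the skew-symmetry $D_{ijk}=-D_{jik}$ together with the trace-free relations \eqref{SD}. One finds that $D_{abn}=0$ and $D_{nnk}=0$, and that the only possibly nonzero components are, after substituting the two facts above into the definition of $D_{ijk}$,
\begin{align*}
D_{anb}&=\frac{|\nabla f|^2}{n-2}\Bigl(h_{ab}-\frac{H}{n-1}g_{ab}\Bigr), \qquad D_{nab}=-D_{anb},\\
D_{ann}&=-\frac{|\nabla f|}{n-1}R_{an}, \qquad D_{nan}=-D_{ann},\\
D_{abc}&=\frac{|\nabla f|}{(n-1)(n-2)}\bigl(g_{bc}R_{an}-g_{ac}R_{bn}\bigr).
\end{align*}
It is worth checking at this point that these expressions are consistent with \eqref{SD}.

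Then I would simply sum the squares. Splitting $|D_{ijk}|^2$ into the three groups just described gives
$$|D_{ijk}|^2=\sum_{a,b,c}D_{abc}^2+2\sum_{a,b}D_{anb}^2+2\sum_a D_{ann}^2\,.$$
The middle sum reproduces exactly $\frac{2|\nabla f|^4}{(n-2)^2}|h_{ab}-\frac{H}{n-1}g_{ab}|^2$. For the remaining two, a short contraction in the orthonormal frame yields $\sum_{a,b,c}D_{abc}^2=\frac{2|\nabla f|^2}{(n-1)^2(n-2)}\sum_a R_{an}^2$ and $\sum_a D_{ann}^2=\frac{|\nabla f|^2}{(n-1)^2}\sum_a R_{an}^2$; adding these with the factor $2$ and simplifying the constant collapses the total to $\frac{2|\nabla f|^2}{(n-1)(n-2)}\sum_a R_{an}^2$, which equals $\frac{1}{2(n-1)(n-2)}|\nabla_a R|^2$ by Hamilton's identity (here $\nabla_a R$ denotes the part of $\nabla R$ tangent to $\Sigma$). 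This is the asserted formula.

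I do not expect a genuine obstacle: the statement is an algebraic identity between pointwise tensors, and the only place that requires care is the bookkeeping — keeping track of which components of $D_{ijk}$ vanish, and fixing once and for all the sign convention relating $\mathrm{Hess}\,f$ to the second fundamental form of the level set (this sign turns out to be immaterial, since $h$ enters only through the manifestly sign-invariant quantity $|h_{ab}-\frac{H}{n-1}g_{ab}|^2$). Carrying out the whole computation in the adapted orthonormal frame, rather than with abstract indices, keeps everything routine.
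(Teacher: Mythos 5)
Your computation is correct: all the component formulas for $D_{ijk}$ in the adapted frame check out, and the sum of squares collapses to the stated identity via Hamilton's identity $\nabla_a R = 2R_{an}|\nabla f|$. The paper itself only cites this lemma from Cao--Chen, and the proof there is exactly this kind of direct computation in an orthonormal frame adapted to the level surface, so your argument is essentially the same as the source's.
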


Thus, the vanishing of $D_{ijk}$
implies the umbilicity of the level surfaces of the potential function as well as the constancy of the scalar curvature on them 
(see also Proposition~\ref{propCC} below).  For further details on the tensor $D_{ijk}$ we refer the interested reader to~\cite[Section 3]{caochen2}\,.

\section{Proof of Theorem~\ref{teo1}}

As in \cite{caochen2}, the first step in proving Theorem 1.1 is to show that, for noncompact steady gradient Ricci solitons with positive Ricci curvature, the 
Bach-flatness implies the vanishing of the 3-tensor $D_{ijk}$. 

\begin{lemma}\label{D0} Let $(M^{n},g_{ij}, f)$ ($n\ge 3$) be a complete Bach-flat gradient steady Ricci soliton with positive Ricci curvature such 
that the scalar curvature $R$ attains its maximum at some interior point. Then, $D_{ijk}=0$.
\end{lemma}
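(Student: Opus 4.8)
The plan is to follow the strategy of Cao--Chen for Bach-flat gradient shrinking solitons in~\cite{caochen2}, adapted to the steady case. Only two structural facts about the Bach tensor enter. The first is the general soliton identity~\eqref{BD}, which under the hypothesis $B_{ij}\equiv0$ reduces to
\begin{equation*}
\nabla_k D_{ikj}=-\frac{n-3}{n-2}\,C_{jli}\nabla_l f\,;
\end{equation*}
the second is the decomposition~\eqref{DCW}, i.e.\ $D_{ijk}=C_{ijk}+W_{ijkl}\nabla_l f$.

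First I would fix the analytic framework. Since $\Ric>0$, the soliton is noncompact (a compact steady soliton is Ricci-flat), so Lemma~\ref{growth} applies: $-f$ is a proper exhaustion with $-f(x)\ge c_1 r(x)-c_2$, the manifold is diffeomorphic to $\RR^n$, and the level sets $\Sigma_c=\{f=c\}$ are compact for $c$ negative enough. By~\eqref{Sum} and $\Ric>0$ one has $|\Ric|\le R\le1$ and $|\nabla f|\le1$, hence $|\nabla R|\le2$ by~\eqref{DR}, so $D_{ijk}$ is bounded on $M$ directly from its definition. Moreover $\Ric>0$ forces polynomial volume growth of geodesic balls (Bishop--Gromov), so the exponentially small weight $e^{f}$ makes all the integrals $\int_M(\,\cdot\,)\,e^{f}\,dV$ below absolutely convergent, and forces the boundary integrals over $\Sigma_c$ to vanish as $c\to-\infty$; the integrations by parts are then justified by a cutoff argument on the exhaustion $\{-f\le\rho\}$, using the standard a priori bounds for the covariant derivatives of curvature on a soliton.

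The core of the argument is a single weighted integration by parts. Using~\eqref{DCW} one writes $|D_{ijk}|^2=D_{ijk}C_{ijk}+D_{ijk}W_{ijkl}\nabla_l f$ and computes $\int_M|D_{ijk}|^2e^{f}\,dV$, transferring derivatives off the Cotton part via $C_{ijk}=\nabla_iA_{jk}-\nabla_jA_{ik}$ (see~\eqref{CA}) and off the Weyl part via the divergence formula~\eqref{DW}; at each step the derivative falls on $e^{f}D$ and produces a factor of $\nabla f$. One then uses the displayed Bach-flat identity to eliminate the divergences of $D$, Hamilton's identity $\nabla_iR=2R_{ij}\nabla_jf$ to dispose of the $\nabla R$ terms, and the symmetries $C_{ijk}=-C_{jik}$, $g^{ij}C_{ijk}=g^{jk}C_{ijk}=0$, their analogues~\eqref{SD} for $D$, and $W_{ijkl}\nabla_if\nabla_jf=0$ to annihilate most of the cross terms. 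The expected outcome is an identity of the form $c\int_M|D_{ijk}|^2e^{f}\,dV=0$ with $c>0$, whence $D_{ijk}\equiv0$. When $n=3$ the Weyl tensor vanishes identically, so $D=C$ and the Bach-flat hypothesis is just $\div C=0$, and the same computation applies in a strictly simpler form.

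The step I expect to be the main obstacle is exactly this reassembly: after the soliton identities and symmetries have been used, one must verify that every surviving term is either a multiple of $\int_M|D|^2e^{f}$, or manifestly nonnegative --- one does gain, for free, the nonnegative Weyl contribution $\int_M W_{ijkm}W_{ijkl}\nabla_mf\nabla_lf\,e^{f}$ --- or absorbable by Young's inequality. The two delicate points are the \emph{sign} (in the steady case one is forced to use the decaying weight $e^{f}$ rather than the shrinkers' $e^{-f}$, so the extra $\nabla f$-corrections coming from the non-self-adjointness of the drift operator must be shown not to spoil sign-definiteness) and the mixed curvature terms, such as $D_{ijk}R_{jk}\nabla_if$ and $C_{ijk}W_{ijkl}\nabla_lf$, which are not obviously signed and whose treatment requires combining the contracted second Bianchi identity with the soliton equation. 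As a final check, \eqref{D2} expresses $|D|^2$ on each $\Sigma_c$ through the traceless second fundamental form and the tangential gradient of $R$, giving an independent confirmation that the boundary contributions are negligible.
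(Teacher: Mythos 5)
Your analytic framework is the same as the paper's (the growth estimate $f\le -c_1r+c_2$ from Lemma~\ref{growth}, the Euclidean volume bound from Bishop, the weight $e^{f}$, boundedness of $D_{ijk}$ via \eqref{Sum} and $|\Ric|\le R$, and the vanishing of the boundary terms), but the core of your argument --- the identity that converts Bach-flatness into $\int_M|D_{ijk}|^2e^{f}\,dV=0$ --- is not actually established. You propose to expand $\int_M|D|^2e^{f}$ through the decomposition \eqref{DCW} as $\int D_{ijk}C_{ijk}e^f+\int D_{ijk}W_{ijkl}\nabla_lf\,e^f$ and to integrate by parts on the Cotton and Weyl factors, and you then state explicitly that the ``reassembly'' of the resulting terms, the sign of the final coefficient, and the mixed terms such as $C_{ijk}W_{ijkl}\nabla_lf$ are the main unresolved obstacles. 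That is precisely the content of the lemma, so as written the proof is incomplete: you have described a plan whose decisive step is flagged as unverified, and the route you chose genuinely does generate unsigned cross terms that do not obviously cancel.

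The missing idea is the choice of test tensor: one should contract \eqref{BD} with $\nabla_if\,\nabla_jf$ rather than expand $|D|^2$ via \eqref{DCW}. Since $C_{jli}=-C_{lji}$, the term $C_{jli}\nabla_lf\,\nabla_if\,\nabla_jf$ vanishes identically, so Bach-flatness gives $0=\int_{B_s(O)}\nabla_kD_{ikj}\,\nabla_if\,\nabla_jf\,e^f\,dV$ up to the factor $-\tfrac1{n-2}$. A \emph{single} integration by parts then produces $D_{ikj}\nabla_k\bigl(\nabla_if\,\nabla_jf\,e^f\bigr)$; the terms $D_{ikj}\nabla_if\nabla_kf$ and $D_{ikj}R_{ik}$ die by the antisymmetry \eqref{SD}, the soliton equation \eqref{steady} turns the remaining Hessian into $-R_{kj}$, and the trace-free/antisymmetry properties of $D$ applied to its own definition give the pointwise identity $D_{ikj}R_{kj}\nabla_if=\tfrac{n-2}{2}|D_{ijk}|^2$. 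This yields directly $0=-\tfrac12\int_M|D|^2e^f\,dV$ plus a boundary term controlled exactly as you describe --- no cross terms, no Bianchi identity, no Young's inequality, and no need for bounds on covariant derivatives of the curvature (the boundary integrand involves only $D$ itself, which is bounded). Without this contraction, or an honest verification that your expansion closes with the right sign, the argument does not yet prove $D_{ijk}=0$.
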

\begin{proof} Since $(M^{n},g_{ij}, f)$ ($n\ge 4$) has positive Ricci curvature and that  
the scalar curvature $R$ attains its maximum at some interior point $O$, by Lemma 2.3, 
there exist constants 
$c_{1},c_{2}>0$ such that
\begin{equation}\label{GF}
f(x) \leq - c_{1} r(x) + c_{2} \,,
\end{equation}
where $r(x)$ is the distance to the origin $O$. Moreover, since $R_{ij}>0$, from the well-known Bishop volume comparison theorem we know 
that $(M^{n},g_{ij}, f)$ has at most Euclidean volume growth, i.e., there exists a positive constant $C>0$, such that
\begin{equation}\label{GV}
Vol (B_{s}(O)) \leq C \,s^{n}\,,
\end{equation}
for any geodesic ball $B_{s}(O)$. By the definition of $D_{ijk}$ and using the identities ~\eqref{BD},~\eqref{SD} and~\eqref{SC}, it follows from the same argument as in \cite{caochen2} that 
\begin{eqnarray*}
\int_{B_{s}(O)} B_{ij} \nabla_{i} f \nabla_{j} f e^{f} \,dV_{g} &=& -\frac{1}{n-2} \int_{B_{s}(O)} \nabla_{k} D_{ikj} \nabla_{i} f \nabla_{j} f e^{f} \,dV_{g} \\
&=& -\frac{1}{2} \int_{B_{s}(O)} |D_{ijk}|^{2} e^{f} \, dV_{g} - \frac{1}{n-2} \int_{\partial B_{s}(O)} D_{ijk} \nabla_{i} f \nabla_{j} f e^{f} \nu_{k}\, d\sigma \,,
\end{eqnarray*}
where $\nu$ denotes the outward unit normal to $\partial B_{s}(O)$. Again, from the definition of $D_{ijk}$, it is easy to check that, for sufficiently large $s$, we have 
\begin{align*}
\big|\int_{\partial B_{s}(O)} D_{ijk} \nabla_{i} f \nabla_{j} f e^{f} \nu_{k}\, d\sigma \big| \, & \leq  \, C \int_{\partial B_{s}(O)} (|R_{ij}| + |R|)|\nabla f|^{3} e^{f} \, d\sigma \, \\
& \leq  2C \int_{\partial B_{s}(O)} e^{f} \, d\sigma \,,
\end{align*}
where we have used identity (\ref{Sum}) and the fact that $|R_{ij}|\leq R$ (since $g$ has positive Ricci curvature). 
By letting $s\rightarrow +\infty$ and using~\eqref{GF}-\eqref{GV}, 
we obtain
$$
0 \,= \, \int_{M} B_{ij} \nabla_{i} f \nabla_{j} f e^{f} \,dV_{g} \, = \, -\frac{1}{2} \int_{M} |D_{ijk}|^{2} e^{f} \, dV_{g} \,, $$
implying $D_{ijk}=0$.

\end{proof}

By Lemma 2.5, the vanishing of the tensor $D_{ijk}$ implies many rigidity properties about the geometry of the level surfaces of the 
potential function $f$:

\begin{prop}[Proposition 3.2 in~\cite{caochen2}]\label{propCC}
Let $(M^n, g, f)$ ($n\ge 3 $) be any complete gradient Ricci
soliton with  $D_{ijk}=0$, and let  $c$ be a regular value of $f$
and $\Sigma_c=\{f=c\}$ be the level surface of $f$. Set
$e_1=\nabla f /|\nabla f |$ and pick any orthonormal frame $e_2,
\cdots, e_n$ tangent to the level surface $\Sigma_c$, Then:

\medskip

(a) $|\nabla f|^2$ and the scalar curvature $R$ of $(M^n,
g_{ij}, f)$ are constant on $\Sigma_c$;

\smallskip
(b) $R_{1a}=0$ for any $a\ge 2$, hence $e_1=\nabla f /|\nabla f |$
is an eigenvector of $Rc$;

\smallskip

(c) the second fundamental form $h_{ab}$ of $\Sigma_c$ is of the
form $h_{ab}=\frac{H}{n-1} g_{ab}$;

\smallskip

(d) the mean curvature $H$ is constant on $\Sigma_c$;

\smallskip

(e) on $\Sigma_c$, the Ricci tensor of $(M^n, g_{ij}, f)$ either
has a unique eigenvalue $\lambda$, or has two distinct eigenvalues
$\lambda$ and $\mu$ of multiplicity $1$ and $n-1$ respectively. In
either case, $e_1=\nabla f /|\nabla f |$ is an eigenvector of
$\lambda$. Both $\lambda$ and $\mu$ are constant on $\Sigma_c$.
\end{prop}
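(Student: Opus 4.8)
The plan is to read off every assertion from the formula~\eqref{D2} together with the soliton identities of Section~2. Fix a regular value $c$ of $f$ and a connected neighbourhood $U$ of $\Sigma_c$ on which $\nabla f\neq 0$; shrinking $U$ we may assume it is foliated by regular level surfaces of $f$. On $U$ the hypothesis $D_{ijk}=0$ kills the left-hand side of~\eqref{D2}, and since the two terms on the right are nonnegative they must vanish separately. This immediately gives
\[
h_{ab}=\frac{H}{n-1}\,g_{ab}\qquad(a,b\ge 2),
\]
which is statement (c), and $\nabla_a R=0$ for $a\ge 2$, i.e.\ the gradient of $R$ has no component tangent to the level surfaces; hence $R$ is constant on each level surface in $U$, which is half of (a).

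Next I would obtain (b) and the remaining part of (a). Written in the orthonormal frame $e_1=\nabla f/|\nabla f|,\,e_2,\dots,e_n$, so that $\nabla_j f=|\nabla f|\,\delta_{1j}$, Hamilton's identity~\eqref{DR} becomes $\nabla_i R=2|\nabla f|\,R_{i1}$. Taking $i=a\ge 2$ and using the previous step gives $|\nabla f|\,R_{1a}=0$, hence $R_{1a}=0$ on $U$ since $|\nabla f|\neq 0$ there: this is (b), and it shows $e_1$ is a Ricci eigenvector. For $|\nabla f|^2$, recall that $R+|\nabla f|^2$ is constant (equal to $1$ after the normalization~\eqref{Sum}); as $R$ is constant on $\Sigma_c$, so is $|\nabla f|^2$. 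This completes (a).

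For (d) and (e), I would combine (c) with the soliton equation~\eqref{steady}. Since $h_{ab}=|\nabla f|^{-1}\nabla_a\nabla_b f=-|\nabla f|^{-1}R_{ab}$ for $a,b\ge 2$, statement (c) forces the tangential block of the Ricci tensor to be a pointwise multiple of the induced metric, $R_{ab}=\mu\,g_{ab}$ with $\mu:=-|\nabla f|\,H/(n-1)$, while $R_{11}=:\lambda$ and $R_{1a}=0$. Taking traces, $R=\lambda+(n-1)\mu$, so $\mu=(R-\lambda)/(n-1)$ and $H=(\lambda-R)/|\nabla f|$; thus once $\lambda$ is known to be constant on $\Sigma_c$, so are $\mu$ and $H$ — giving (d) — and the Ricci tensor restricted to $\Sigma_c$ has eigenvalue $\lambda$ along $e_1$ (multiplicity $1$) and eigenvalue $\mu$ on the tangent space of $\Sigma_c$ (multiplicity $n-1$), equal or distinct, both constant — giving (e). To see that $\lambda=R_{11}$ is constant on $\Sigma_c$ I would use the foliation: $R$ and $|\nabla f|^2$ are constant on each regular level of $f$ in $U$, hence are smooth functions of $f$ alone there, so differentiating $R=R(f)$ along $e_1$ and invoking $\nabla_1 R=2|\nabla f|\,R_{11}$ gives $\lambda=\tfrac12 R'(f)$, again a function of $f$ and therefore constant on $\Sigma_c$.

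Almost everything here is bookkeeping once~\eqref{D2} is available — (a), (b), (c) come out at once — and the one step that needs a genuine (if modest) idea is the constancy of the Ricci eigenvalues in (d)–(e): the point is that a quantity which is constant on the level surfaces near a regular value of $f$ is automatically a function of $f$, which then feeds back through Hamilton's identity~\eqref{DR}. (For general, non-steady gradient Ricci solitons the same argument should work verbatim, replacing~\eqref{Sum} by $R+|\nabla f|^2-2\rho f=\text{const}$ and~\eqref{D2} by its analogue for arbitrary solitons from~\cite{caochen2}.)
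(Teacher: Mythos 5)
Your argument is correct; the paper itself gives no proof of this proposition (it is quoted verbatim as Proposition 3.2 of \cite{caochen2}), and your derivation --- reading (c) and the tangential constancy of $R$ off the norm identity \eqref{D2}, then feeding these back through Hamilton's identity \eqref{DR} and the soliton equation to obtain (a), (b), (d) and (e), with the constancy of $\lambda=R_{11}$ coming from the observation that $R$ is locally a function of $f$ near a regular level --- is essentially the argument of the cited source. The only point worth flagging is that \eqref{D2} is stated in Section 2 only for steady solitons, so for a general gradient soliton one must invoke its analogue from \cite{caochen2}, as you note in your closing parenthesis.
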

 
Now we are in the position to complete the proof of Theorem~\ref{teo1}: 

\medskip {\em Proof of Theorem 1.1}. \ Let $(M^{n},g,f)$, $n\ge 4$, be a complete Bach-flat gradient steady Ricci solitons with positive Ricci curvature such that the scalar curvature 
$R$ attains its maximum at some interior point $O\in M$.  Then, by Lemma~\ref{growth} we know that $f$ is proper, strictly concave, has a unique critical point at $O$, and that $M^n$ is diffeomorphic to $\mathbb{R}^{n}$. On the other hand, by Lemma~\ref{D0}, we have that $D_{ijk}=0$. Therefore for $n=4$, from~\cite[Theorem 1.4]{caochen2} and the assumption of positive Ricci curvature, we conclude that $(M^{4},g_{ij},f)$ is isometric to the Bryant soliton up to a scaling factor.

From now on let us consider $n\geq 5$. First of all, on $M \setminus \{O\}$, the soliton metric $g_{ij}$ can be expressed as 
$$ds^2=\frac{1}{|\nabla f|^2} df^2+g_{ab}(f, \theta) d\theta^a d\theta^{b},$$
where  $(\theta^2, \cdots, \theta^n)$ is  any local coordinates system on the lever surface $\Sigma=\{f=f(p)\}$ at $p\in M \setminus \{O\}$. 
Note that, since $D_{ijk}=0$, $|\nabla f|^2$ depends only on $f$ by Proposition~\ref{propCC} (a). Hence, by a suitable change of variable, we can further express $g_{ij}$
as $$ ds^2=dr^2+g_{ab}(r, \theta) d\theta^a
d\theta^{b}\ , \quad 0<r<\infty \,.$$ Here $r(x)$ is the distance function from $O$. We remark that, by Lemma 2.3, $|f|(x)$ is proportional to $r(x)$.  \\

\noindent {\bf Claim 1}: For $r>0$, the induced metric $\bar g_{\Sigma_r}=g_{ab}(r, \theta) d\theta^a
d\theta^{b}$ on each level surface $\Sigma_r$ is Einstein. \\ 

Indeed, we have the following more general fact: 

\begin{lemma}\label{propE}
Let $(M^n, g, f)$ ($n\ge 4$) be a complete gradient Ricci soliton with $D_{ijk}=0$. Then each regular level surface $\Sigma$, with the induced 
metric  $\bar g_{\Sigma}$, is an Einstein manifold. 
\end{lemma}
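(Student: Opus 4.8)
The plan is to work on the level surface $\Sigma = \{f = c\}$ for a regular value $c$, use the Gauss equations together with the strong structure forced by $D_{ijk}=0$ (Proposition~\ref{propCC}), and conclude that the intrinsic Ricci tensor $\overline{R}_{ab}$ of $(\Sigma, \bar g_\Sigma)$ is a constant multiple of $\bar g_{ab}$. First I would set $e_1 = \nabla f/|\nabla f|$ and complete to an adapted orthonormal frame $e_1, e_2, \dots, e_n$ with $e_2, \dots, e_n$ tangent to $\Sigma$. The Gauss equation reads $\overline{R}_{abcd} = R_{abcd} + h_{ac}h_{bd} - h_{ad}h_{bc}$, and contracting gives $\overline{R}_{ab} = R_{ab} - R_{1a1b} + H h_{ab} - h_{ac}h^{c}_{\ b}$. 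By Proposition~\ref{propCC}(c) we have $h_{ab} = \tfrac{H}{n-1}g_{ab}$, so the last two terms combine to $\tfrac{(n-2)H^2}{(n-1)^2} g_{ab}$, which is a (constant, by (d)) multiple of $\bar g_{ab}$. Hence it remains to show that $R_{ab} - R_{1a1b}$ is a constant multiple of $\bar g_{ab}$ on $\Sigma$.

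The key is that $D_{ijk}=0$ controls both $R_{ab}$ restricted to $\Sigma$ and the sectional curvatures $R_{1a1b}$. By Proposition~\ref{propCC}(e), the ambient Ricci tensor on $\Sigma$ has $e_1$ as an eigenvector with eigenvalue $\lambda$ (constant on $\Sigma$), and on the tangent directions it has a single eigenvalue $\mu$ (constant on $\Sigma$), i.e. $R_{ab} = \mu\, \bar g_{ab}$ — unless there are two tangential eigenvalues, but (e) says the tangential eigenvalue has multiplicity $n-1$, so $R_{ab} = \mu\,\bar g_{ab}$ on $\Sigma$ in all cases. Next I would extract the mixed sectional curvatures: using $D_{ijk}=0$ with the expression \eqref{DCW}, $C_{ijk} = -W_{ijkl}\nabla_l f$, combined with the soliton identity $C_{ijk} = \nabla_i A_{jk} - \nabla_j A_{ik}$ and the radial structure $ds^2 = dr^2 + g_{ab}(r,\theta)d\theta^a d\theta^b$. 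Along $e_1$, the soliton equation gives $\nabla_1\nabla_1 f = -R_{11} = -\lambda$ and $\nabla_a \nabla_b f = -R_{ab} = -\mu\,\bar g_{ab}$, so the second fundamental form and mean curvature are determined by $f$-derivatives alone; differentiating the relation $|\nabla f|^2 = $ function of $r$ (Proposition~\ref{propCC}(a)) and using the Riccati equation $\partial_r h_{ab} = -R_{1a1b} - h_{ac}h^c_{\ b}$ along the gradient flow of $f$ shows that $R_{1a1b}$ must also be a multiple of $\bar g_{ab}$, with a proportionality factor constant on $\Sigma$ (since everything in sight — $|\nabla f|$, $H$, $\lambda$, $\mu$ — is constant there). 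Combining, $\overline{R}_{ab} = (\mu - \text{(radial sectional curvature)} + \tfrac{(n-2)H^2}{(n-1)^2})\,\bar g_{ab}$, which is the desired Einstein condition; for $n\ge 4$ the surface dimension $n-1 \ge 3$, so a metric with Ricci tensor proportional to $g$ at every point has that factor globally constant automatically by Schur's lemma — but here we have it by the explicit pointwise constancy anyway.

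The main obstacle I expect is making the second step fully rigorous: namely, showing that $R_{1a1b}$ is pointwise proportional to $\bar g_{ab}$ rather than merely having trace-free part annihilated in some averaged sense. The clean way to do this is to avoid computing $R_{1a1b}$ directly and instead contract the Gauss equation the other way, or better, to observe that $\overline{R}_{abcd}$ can be computed from the ambient curvature which, by $W=0$ being too strong to assume, must be handled via $D_{ijk}=0 \Rightarrow$ the level-set second fundamental form is umbilic with constant mean curvature $\Rightarrow$ the normal direction is "spherical"; then the Codazzi equation $\overline{\nabla}_a h_{bc} - \overline{\nabla}_b h_{ac} = R_{1cab}$ together with umbilicity and constant $H$ forces $R_{1cab} = 0$, hence $R_{1a1b}$ enters only through the Gauss equation via the ambient Ricci, which we already know is $\mu\,\bar g_{ab} + \lambda\,(e_1\otimes e_1)$. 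This reduces everything to Proposition~\ref{propCC} with no delicate ODE analysis. I would present the argument in that order: adapted frame and Gauss/Codazzi, invoke Proposition~\ref{propCC}(a)--(e) to pin down $h_{ab}$, $R_{ab}|_\Sigma$, and (via Codazzi) the mixed curvature, then assemble $\overline{R}_{ab} = \kappa\,\bar g_{ab}$ with $\kappa$ constant on $\Sigma$.
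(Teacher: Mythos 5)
Your setup (adapted frame, contracted Gauss equation, umbilicity from Proposition~\ref{propCC}(c) and constancy from (a), (d), (e)) matches the paper's proof, and you correctly reduce the lemma to showing that $R_{1a1b}$ is a multiple of $\bar g_{ab}$ with factor constant on $\Sigma$. But the route you settle on for that step has a genuine gap. The Codazzi equation with umbilic, constant-mean-curvature leaves gives $R_{1cab}=0$, i.e.\ it controls the curvature components with exactly \emph{one} normal index; it says nothing about $R_{1a1b}$, which has two. Your assertion that ``$R_{1a1b}$ enters only through the Gauss equation via the ambient Ricci'' is false: the contracted Gauss equation is $\overline{R}_{ab}=R_{ab}-R_{1a1b}+Hh_{ab}-h_{ac}h^{c}{}_{b}$, and knowing $R_{ab}=\mu\,\bar g_{ab}$ only determines the combination $R_{1a1b}+\sum_c R_{cacb}$, not $R_{1a1b}$ itself. (A warped product $dr^2+w(r)^2\bar g$ always has $R_{1abc}=0$ while $R_{1a1b}=-(w''/w)\,g_{ab}$ can be anything, so no amount of Codazzi information pins this term down.) The paper closes exactly this gap with a different input: Corollary~5.1 of \cite{caochen2} shows that $D_{ijk}=0$ forces $W_{1a1a}=0$, so the Weyl decomposition expresses $R_{1a1a}$ purely in terms of $R_{aa}$, $R_{11}$ and $R$, all of which are constant on $\Sigma$ by Proposition~\ref{propCC}.

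Ironically, the route you discarded as requiring ``delicate ODE analysis'' is sound and short: since $h_{ab}=\tfrac{H(r)}{n-1}g_{ab}$ with $H$ constant on each leaf (so $H=H(r)$) and $\partial_r g_{ab}$ proportional to $h_{ab}$, the radial Riccati equation expressing $R_{1a1b}$ through $\partial_r h_{ab}$ and $h_{ac}h^{c}{}_{b}$ immediately yields $R_{1a1b}=\psi(r)\,g_{ab}$ with $\psi$ depending on $r$ alone; equivalently, $D_{ijk}=0$ forces the warped-product form established in Claim~2 of the paper, for which this proportionality is automatic. Either that computation or the citation of $W_{1a1b}=0$ would complete your argument; as written, with the Codazzi step carrying the weight, the proof is incomplete.
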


\begin{proof} Let  $\{e_1, e_2, \cdots, e_n\}$ be any orthonormal frame, with
$e_1=\nabla f/|\nabla f|$ and $e_2, \cdots, e_n$ tangent to
$\Sigma$. Then, by the Gauss equation and Proposition~\ref{propCC} (c),  the sectional curvatures of $(\Sigma, g_{ab})$ are given by
$$
R^{\Sigma}_{abab} = R_{abab} +h_{aa}h_{bb}-h_{ab}^2= R_{abab} + \frac{H^2}{(n-1)^2}\,.
$$
Thus, the Ricci curvatures of  $(\Sigma, g_{ab})$ are 
$$R^{\Sigma}_{aa}=R_{aa}-R_{1a1a} +\frac{H^2}{n-1}.$$
On the other hand, by Corollary 5.1 in~ \cite{caochen2}, we know that $W_{1a1a}=0$. Thus, 
\begin{align*} 
R_{1a1a}= & \frac{1}{n-2} (R_{aa}+R_{11})-\frac{R}{(n-1)(n-2)}\\
= & -R_{aa} +\frac {R}{n-1}\,.
\end{align*} 
Hence,  it follows that 
$$R^{\Sigma}_{aa}= 2R_{aa} +\frac {H^2-R} {n-1}\,.$$
But, by Proposition~\ref{propCC}, $R, H$ and $\mu=R_{aa}$ are constant along $\Sigma$. This proves that $(\Sigma, g_{ab})$ has constant Ricci curvature. 

\end{proof}

\noindent {\bf Claim 2}:  On $M \setminus \{O\}$, the metric $g$ takes the form of a warped product metric: 
\begin{equation}\label{metric}
ds^2 = dr^{2} + w(r)^{2} {\bar g}_{E}\,, \quad r\in(0,+\infty) \,,
\end{equation}
where $w$ is some nonnegative smooth function on $M^{n}$ vanishing only at $O$, and $\bar g_{E}=\bar g_{\Sigma_1}$ is the Einstein 
metric defined on the level surface $\Sigma_1$. \\


Indeed, by identity (5.3) in~\cite{caochen} and Propositon~\ref{propCC}, we have 
$$ \frac{\partial } {\partial r} g_{ab}=-2 h_{ab}= \phi (r) g_{ab}\ ,$$ where $\phi (r)=-2H(r)/(n-1)$. Thus, it follows easily that 
$$g_{ab} (r, \theta)=e^{\Phi(r)} g_{ab} (1, \theta),$$ where 
$$
\Phi(r)=\int_{1}^{r} \phi(r) \, dr.
$$ 
This proves Claim 2. \\

By scaling, we can assume that 
\begin{equation}\label{ein}
\Ric_{{\bar g}_{E}}=(n-2) k\,{\bar g}_{E}, \quad \mbox{with} \quad  k=-1,0, 1.
\end{equation}
We shall see below that in fact  $k=1$, as we expected. \\

\noindent {\bf Claim 3}: We have

$$
\lim_{r\rightarrow 0} \frac{w(r)}{r}=1 \,.
$$

Clearly, $w(r)\to 0$ as $r\to 0$. On the other hand, on $M\setminus\{O\}$, the Ricci tensor and the scalar curvature of the metric $g$ in~\eqref{metric} take the form (see~\cite[Proposition~9.106]{besse}) 
$$
\Ric_{g} = -(n-1) \frac{w''}{w} dr\otimes dr + \big((n-2)(k-(w')^{2}) - w\,w''\big)\,{\bar g}_{E}\,,
$$ and
$$
R_{g} = -2(n-1)\frac{w''}{w} + \frac{(n-1)(n-2)}{w^{2}}\big(k-(w')^{2}\big)
$$ respectively.   Here we have used the Claim 1 and the normalization \eqref{ein}.

From the expression of the Ricci tensor above and the fact that $|Rc|\le 1$ on $M^n$, it is immediate to see that $w''/w$ must be bounded as $r\rightarrow 0$. 
Hence, from the above scalar curvature expression, 
it is easy to deduce the claim.  In particular, we can conclude that the Einstein constant $k=1$ for the metric ${\bar g}_{E}$. \\

\noindent {\bf Claim 4}:  ${\bar g}_{E}$ is equal to the standard round metric ${\bar g}_{\mathbb{S}^{n-1}}$ on the unit sphere $\mathbb{S}^{n-1}$. \\


This essentially follows from the previous claims and the elementary fact that infinitesimally the metric $g$ is approximately Euclidean near $O$. 
In fact, the standard expansion of the metric $g$ around $O$, written in any normal coordinates $(x^1, \cdots, x^n)$, gives

\begin{eqnarray*} 
g &=& (\delta_{ij}+ \sigma_{ij}(x))\, dx^{i}\otimes dx^{j} \\
&=& g_{\mathbb{R}^{n}}+ \sigma_{ij}\, dx^{i}\otimes dx^{j} \,,
\end{eqnarray*} 
where $\sigma_{ij}=\mathcal{O}(|x|^{2})$. To pass to polar coordinates, we write $x^{i} = r \phi^{i}(\theta^{1},\ldots,\theta^{n-1)})$, with $r\in(0,+\infty)$ and $(\theta^{1},\ldots,\theta^{n-1})$ being
local coordinates on $\mathbb{S}^{n-1}$. Notice that $|\phi^{1}|^{2}+\dots+|\phi^{n}|^{2}=1$ and $|x|=r$. Thus, one has \begin{eqnarray*}
g &=& (1+\sigma_{ij}\phi^{i}\phi^{j})dr\otimes dr + r\,\sigma_{ij}\frac{\partial \phi^{i}}{\partial \theta^{\alpha}} \phi^{j} dr \otimes d\theta^{\alpha} +r\,\sigma_{ij}\frac{\partial \phi^{j}}{\partial \theta^{\alpha}} \phi^{i} d\theta^{\alpha} \otimes dr + \\
&& +\,\big( r^{2} {\bar g}^{\mathbb{S}^{n-1}}_{\alpha\beta}+\,r^{2}\sigma_{ij}\frac{\partial \phi^{i}}{\partial \theta^{\alpha}}\frac{\partial \phi^{j}}{\partial \theta^{\beta}}\big) \,d\theta^{\alpha} \otimes d\theta^{\beta}\,,
\end{eqnarray*} 
with $\sigma_{ij}=\mathcal{O}(r^{2})$. Comparing with~\eqref{metric}, we see that $\sigma_{ij}\phi^{j}=0$ and  
$$  w^{2} (r) {\bar g}_{E} \,= \, r^{2} {\bar g}_{\mathbb{S}^{n-1}} +\,r^{2}\sigma_{ij}\frac{\partial \phi^{i}}{\partial \theta^{\alpha}}\frac{\partial \phi^{j}}{\partial \theta^{\beta}} \,d\theta^{\alpha} \otimes d\theta^{\beta}\,,  \quad r\in(0,+\infty) \,.$$
Now using the fact that $\sigma_{ij}=\mathcal{O}(r^{2})$ and Claim 3, and taking the limit as $r\rightarrow 0$, we obtain
$${\bar g}_{E} \,=\, {\bar g}_{\mathbb{S}^{n-1}} \,. $$ Therefore, on $M\setminus\{O\}$, we have 
$$ds^2 = dr^{2} + w(r)^{2} {\bar g}_{\mathbb{S}^{n-1}}\,, \quad r\in(0,+\infty) \,, $$
proving that the soliton metric $g$ is rotationally symmetric. Therefore, it follows that $(M^n, g, f)$ is the Bryant soliton, because we know that $M^n$ is 
diffeomorphic to $\mathbb {R}^n$ and the Bryant soliton is the only non-flat rotationally symmetric gradient steady soliton on $\mathbb{R}^n$ up to scaling.  
This completes the proof of Theorem~\ref{teo1}. 

\qed

\medskip

\section{Proof of Theorem~\ref{teo3D}}

In the special case $n=3$, we can show that divergence-free Bach tensor implies the vanishing of the Cotton tensor for all gradient Ricci solitons by a pointwise argument, 
which allows us to remove the assumptions on the positivity of the Ricci curvature and the scalar curvature achieving its interior maximum. 

\medskip

\noindent {\em Proof of Theorem~\ref{teo3D}}. Let $(M^3, g, f)$ be a three-dimensional complete gradient Ricci soliton with divergence-free Bach tensor. We recall that in dimension three we have defined the Bach tensor as
\begin{equation}\label{BW}
B_{ij}=\nabla_k C_{kij}\,.
\end{equation}
We claim that 
\begin{equation}\label{divB}
\nabla_j B_{ij}=-C_{ijk}R_{jk}\,.
\end{equation}
Indeed,  in terms of the Schouten tensor 
$$ 
A_{ij}= R_{ij}- \frac{R}{4}g_{ij}\,,
$$  
and the Cotton tensor  
$$
C_{ijk}=\nabla_i A_{jk}-\nabla_j A_{ik}\,,
$$
we have 
$$
B_{ij}= \nabla_k (\nabla_kA_{ij}-\nabla_iA_{kj}) \,.
$$
Hence 
\begin{align*}
\nabla_i B_{ij}= & \nabla_i \nabla_k (\nabla_kA_{ij}-\nabla_iA_{kj}) \\
= & (\nabla_i \nabla_k-\nabla_k \nabla_i)\nabla_kA_{ij}\\
 = & -R_{il}\nabla_lA_{ij} +R_{kl}\nabla_kA_{lj}+R_{ikjl}\nabla_kA_{il}\\
= & R_{ikjl}\nabla_kA_{il}\,.
\end{align*}
On the other hand,  since the Weyl tensor $W=0$ in dimension three, (\ref{WRA}) becomes 
$$R_{ijkl} = g_{ik}A_{jl}-g_{il}A_{jk}-g_{jk}A_{il}+g_{jl}A_{ik}\,.$$
Therefore, 
$$
\nabla_i B_{ij}= (A_{jk}g_{il}C_{lki} +A_{ik}C_{kji}) = -R_{ki}C_{jki}\,,
$$
proving the claim.

Now assume $(M^3, g, f)$ is any three-dimensional gradient Ricci soliton. Recall that, for $n=3$, we have
\begin{align*} 
C_{ijk}= & D_{ijk} \\
= & R_{jk} \nabla_i f- R_{ik} \nabla_j f +\frac{1}{4} (g_{jk}\nabla_i R-g_{ik} \nabla_j R)
               - \frac{R}{2} (g_{jk}\nabla_i f  - g_{ik} \nabla_j f )\,.
\end{align*}
Thus,  using~\eqref{divB} and~\eqref{SC}, we get 
$$ \div(B)\cdot \nabla f=-C_{ijk}R_{jk}\nabla_i f=-\frac{1}{2} |C_{ijk}|^2.$$
Therefore, $\div (B)=0$ implies the Cotton tensor $C_{ijk}=0$, which is equivalent to that $(M^3, g, f)$ is locally conformally flat. 

\qed

Consequently, by combining Theorem~\ref{teo3D} and the classification theorem in~\cite{caochen} for three-dimensional complete locally 
conformally flat gradient steady Ricci solitons, we have 

\begin{cor} Let $(M^3, g, f)$ be a complete gradient  steady Ricci soliton with divergence-free Bach tensor, then it is either flat or isometric to the Bryant soliton.
\end{cor}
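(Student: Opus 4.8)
The plan is to combine Theorem~\ref{teo3D} with the classification of three-dimensional complete locally conformally flat gradient steady Ricci solitons in~\cite{caochen}. First I would apply Theorem~\ref{teo3D} to the given soliton $(M^3,g,f)$, obtaining the dichotomy that it is either Einstein or locally conformally flat.

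Next I would check that the Einstein alternative can only produce flat metrics. If $(M^3,g,f)$ is Einstein, then $R_{ij}=\tfrac{R}{3}g_{ij}$ with $R$ a constant, so Hamilton's identity~\eqref{DR} gives $0=\nabla_i R=2R_{ij}\nabla_j f=\tfrac{2R}{3}\nabla_i f$; hence either $R=0$, or $\nabla f\equiv 0$ and then the steady equation~\eqref{steady} forces $R_{ij}=0$ as well. In either case $(M^3,g,f)$ is Ricci flat, and since in dimension three the full curvature tensor is algebraically determined by the Ricci tensor, it is flat.

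In the locally conformally flat alternative I would simply invoke the three-dimensional classification of~\cite{caochen}, which asserts that a complete three-dimensional locally conformally flat gradient steady soliton is either flat or isometric (up to scaling) to the Bryant soliton. Putting the two cases together gives the corollary. There is essentially no obstacle here: the statement is a formal consequence of Theorem~\ref{teo3D} and of the cited classification, and the only computation involved is the one-line reduction of the Einstein case to flatness via~\eqref{DR}.
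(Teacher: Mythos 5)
Your proposal is correct and follows exactly the paper's route: the corollary is stated there as an immediate consequence of Theorem~\ref{teo3D} together with the classification of three-dimensional complete locally conformally flat gradient steady solitons in~\cite{caochen}. Your explicit one-line disposal of the Einstein alternative via~\eqref{DR} (Einstein plus steady forces Ricci flat, hence flat in dimension three) is a routine check that the paper leaves implicit, and it is carried out correctly.
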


\begin{rem}\label{divbach}  For $n\ge 4$, it is known among experts that the divergence of the Bach tensor is given by
$$
 \nabla_j B_{ij}=\frac{n-4}{(n-2)^2} C_{ijk}R_{jk}\,.
$$
\end{rem}

\section{Further remarks}

It was proved in~\cite[Theorem 1.4]{caochen2} that any 4-dimensional complete gradient steady Ricci soliton with $D_{ijk}=0$ is either Ricci flat, or locally conformally flat but non-flat (hence isometric 
to the Bryant soliton by~\cite{caochen} and~\cite{mancat1}). In the proof of Theorem~\ref{teo1}, we have actually shown the following 

\begin{prop}\label{prop27} Let $(M^{n},g_{ij}, f)$, $n\geq 4$, be a complete gradient steady Ricci soliton with $D_{ijk}=0$. If, in addition, the Ricci curvature is positive and the scalar curvature $R$ attains its maximum at some interior point, then $(M^{n},g_{ij}, f)$ is isometric to the Bryant soliton up to a scaling factor.
\end{prop}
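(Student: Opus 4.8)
The plan is to recycle almost the entire argument already carried out for Theorem~\ref{teo1}, observing that the only place where Bach-flatness was used is in Lemma~\ref{D0} to deduce $D_{ijk}=0$. Since Proposition~\ref{prop27} takes $D_{ijk}=0$ as a hypothesis, that step is bypassed entirely, and everything downstream goes through verbatim. So the proof is essentially a pointer: ``apply the proof of Theorem~\ref{teo1} starting from the line `by Lemma~\ref{D0}, we have that $D_{ijk}=0$'.''

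Concretely, I would proceed as follows. First, invoke Lemma~\ref{growth}: positivity of the Ricci curvature together with $R$ attaining an interior maximum gives that $f$ is proper and strictly concave with a unique critical point $O$, and $M^n$ is diffeomorphic to $\RR^n$. Second, for $n=4$ quote~\cite[Theorem~1.4]{caochen2} directly: a complete gradient steady soliton with $D_{ijk}=0$ is Ricci flat or locally conformally flat non-flat; positivity of Ricci rules out the flat alternative, and \cite{caochen, mancat1} identify the remaining case with the Bryant soliton. Third, for $n\ge 5$, run Claims 1--4 of the proof of Theorem~\ref{teo1}: using Proposition~\ref{propCC} and Lemma~\ref{propE} one writes $g = dr^2 + w(r)^2 \bar g_E$ on $M\setminus\{O\}$ with $\bar g_E$ Einstein; the bound $|\mathrm{Rc}|\le 1$ forces $w''/w$ bounded near $0$ and hence $w(r)/r\to 1$, pinning the Einstein constant to $k=1$; and the normal-coordinate expansion near $O$ forces $\bar g_E = \bar g_{\SS^{n-1}}$. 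Thus $g$ is rotationally symmetric on $\RR^n$, non-flat (again by $\mathrm{Rc}>0$), hence the Bryant soliton.

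There is essentially no obstacle here, since the content is entirely contained in the already-proved Theorem~\ref{teo1}; the only thing to be careful about is making sure that no step between ``$D_{ijk}=0$'' and the conclusion secretly re-used Bach-flatness, and indeed it does not — Bach-flatness enters only through the integral identity in Lemma~\ref{D0}. The ``hard part,'' such as it is, is purely expository: deciding how much of the Theorem~\ref{teo1} argument to reproduce versus cite. I would keep the proof short, stating that it follows line by line from the proof of Theorem~\ref{teo1} after the invocation of Lemma~\ref{D0}, and perhaps reproducing only the $n=4$ citation explicitly for the reader's convenience.

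\begin{proof}[Proof of Proposition~\ref{prop27}]
The argument is exactly the proof of Theorem~\ref{teo1}, with the sole difference that the vanishing of $D_{ijk}$, which there was derived from Bach-flatness via Lemma~\ref{D0}, is now part of the hypotheses; Bach-flatness is used nowhere else. Indeed, since $\mathrm{Rc}>0$ and $R$ attains its maximum at an interior point $O$, Lemma~\ref{growth} gives that $f$ is proper and strictly concave with unique critical point $O$ and that $M^n$ is diffeomorphic to $\RR^n$. For $n=4$, \cite[Theorem~1.4]{caochen2} together with $\mathrm{Rc}>0$ (which excludes the Ricci flat case) and \cite{caochen, mancat1} show that $(M^4,g_{ij},f)$ is the Bryant soliton up to scaling. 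For $n\ge 5$, Claims 1--4 in the proof of Theorem~\ref{teo1} apply verbatim: by Proposition~\ref{propCC} and Lemma~\ref{propE} the metric takes the warped product form $ds^2=dr^2+w(r)^2\bar g_E$ on $M\setminus\{O\}$ with $\bar g_E$ Einstein, the bound $|\mathrm{Rc}|\le 1$ forces $w(r)/r\to 1$ as $r\to 0$ and hence the Einstein constant $k=1$, and the normal coordinate expansion of $g$ near $O$ yields $\bar g_E=\bar g_{\SS^{n-1}}$. Thus $g$ is rotationally symmetric on $\RR^n$ and non-flat, hence isometric to the Bryant soliton up to a scaling factor.
\end{proof}
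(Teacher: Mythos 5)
Your proposal is correct and matches the paper's own justification exactly: the paper introduces Proposition~\ref{prop27} with the remark that ``in the proof of Theorem~\ref{teo1}, we have actually shown the following,'' i.e.\ Bach-flatness enters only through Lemma~\ref{D0} to produce $D_{ijk}=0$, and once that is assumed the remainder of the argument (Lemma~\ref{growth}, the $n=4$ citation of \cite{caochen2}, and Claims 1--4 for $n\ge 5$) carries over verbatim. Nothing is missing.
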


On the other hand, Brendle~\cite{brendle2} proved the following result\footnote{Although Brendle only stated this result for $n=3$ in \cite{brendle2}, the same 
argument, as shown by him in the preprint arXiv:1010.3684v1,  works for all dimensions $n\ge 4$.}\,: 
 
\begin{prop}[\bf{Brendle~\cite{brendle2}}]\label{bre}
Let $(M^n, g, f)$ ($n\ge 3$) be a $n$-dimensional gradient steady Ricci soliton. Suppose that the 
scalar curvature R of $(M^n, g)$ is positive and approaches zero at infinity. Denote by $\psi: (0,1) \to \mathbb R$ the smooth function such that the vector field 
$$X=:\nabla R +\psi (R)\nabla f=0$$ on the Bryant soliton, and define $u: (0,1) \to \mathbb R$ by
$$ u(s)=\log \psi (s) +\frac{1}{n-1}\int_{1/2}^{s} (\frac{n}{1-t} - \frac{n-1-(n-3)t}{(1-t)\psi(t)}) dt. $$
Moreover,  assume that there exists an exhaustion of $M^n$ by bounded domains $\Omega_l$ 
such that
\begin{equation}\label{asym}
\lim_{l \to \infty}  \int_{\partial\Omega_l} e^{u(R)} <\nabla R +\psi (R) \nabla f, \nu> \, = 0. 
\end{equation}
Then $X=0$ and $D_{ijk}=0$. In particular, for $n=3$, $(M^3, g, f)$ is isometric to the Bryant soliton. 

\end{prop}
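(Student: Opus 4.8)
The plan is to derive a weighted integral identity for the vector field $X=\nabla R+\psi(R)\nabla f$ whose boundary term is exactly the quantity controlled by the hypothesis~\eqref{asym}. First I would recall the relevant structure of the Bryant soliton: writing its metric as $dr^{2}+\varphi(r)^{2}\,{\bar g}_{\mathbb{S}^{n-1}}$, both the scalar curvature $R$ and the potential $f$ are functions of $r$ alone, so that $\nabla R$ and $\nabla f$ are everywhere proportional; the function $\psi$ is, by definition, the one for which $\nabla R+\psi(R)\nabla f\equiv 0$ on the Bryant soliton, and $u$ is then the explicit primitive displayed in the statement. For a general steady soliton with $R>0$ and $R\to 0$ at infinity, after the normalization $R+|\nabla f|^{2}=1$ of Lemma~2.1 (legitimate by Lemma~2.2) one has $0<R\le 1$, so $\psi(R)$ and $u(R)$ are well-defined on $M^{n}$ (with $e^{u(R)}$ bounded near the critical points of $f$, by the choice of $u$), and $X$ is a globally defined vector field.

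The heart of the argument is the pointwise identity
$$
\div\!\big(e^{u(R)}X\big)\;=\;e^{u(R)}\,Q\,,
$$
where $Q\ge 0$ is a sum of squares built from $X$ and from the tensor $D_{ijk}$ — equivalently, by Lemma~2.5, from $X$, from the traceless second fundamental form of the regular level surfaces of $f$, and from the tangential gradient of $R$ — so that $Q=0$ forces both $X=0$ and $D_{ijk}=0$. To obtain it I would expand
$$
\div\!\big(e^{u(R)}X\big)=e^{u(R)}\Big(u'(R)\,\langle\nabla R,X\rangle+\Delta R+\psi'(R)\,\langle\nabla R,\nabla f\rangle+\psi(R)\,\Delta f\Big)
$$
and then rewrite every term by means of the soliton identities of Lemma~2.1 together with $\Delta f=-R$, the identity $\Delta R-\langle\nabla f,\nabla R\rangle=-2|R_{ij}|^{2}$ valid on steady solitons, and the decomposition of $R_{ij}$ and of $\nabla_{k}R_{ij}$ relative to the splitting $TM=\mathbb{R}\,\nabla f\oplus T\Sigma$ along a regular level surface $\Sigma$ of $f$ (the latter via the second Bianchi identity and the Gauss--Codazzi equations). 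The specific choice of $u$ in the statement — which is imposed on us — is exactly what makes all the terms in the resulting expression that fail to be sign-definite cancel, leaving only $Q$. Carrying out this cancellation, and recognizing the remainder as the advertised nonnegative quantity, is the main computational obstacle; this is the step where the precise form of $\psi$, coming from the Bryant ODEs, is used.

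Granting the identity, the rest is routine. Integrating over the exhausting domains $\Omega_{l}$ and applying the divergence theorem gives
$$
\int_{\Omega_{l}}e^{u(R)}\,Q\;dV_{g}\;=\;\int_{\partial\Omega_{l}}e^{u(R)}\,\langle X,\nu\rangle\,d\sigma\,;
$$
by~\eqref{asym} the right-hand side tends to $0$ as $l\to\infty$, while the left-hand side is nondecreasing in $l$ because $Q\ge 0$, whence $\int_{M^{n}}e^{u(R)}\,Q\;dV_{g}=0$. Since $e^{u(R)}>0$, this forces $Q\equiv 0$, hence $X=0$ and $D_{ijk}=0$ on $M^{n}$. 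Finally, when $n=3$ one has $D_{ijk}=C_{ijk}$, so the Cotton tensor vanishes identically and $(M^{3},g_{ij})$ is locally conformally flat; since $R>0$ it cannot be flat, so by the Cao--Chen classification~\cite{caochen} of complete locally conformally flat gradient steady solitons it is isometric to the Bryant soliton.
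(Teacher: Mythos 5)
The paper does not actually prove this proposition; it is imported from Brendle~\cite{brendle2} (with a footnote for $n\ge 4$), so the only benchmark is Brendle's own argument. Your proposal reproduces its architecture faithfully: the weighted vector field $e^{u(R)}X$, a pointwise divergence identity with nonnegative right-hand side, integration over the exhaustion $\Omega_l$ with \eqref{asym} killing the boundary term, and the endgame in dimension three ($D_{ijk}=0$ together with Lemma 2.4 and $W\equiv 0$ gives $C_{ijk}=0$, hence local conformal flatness and the Cao--Chen classification, the flat case being excluded by $R>0$). That part is all correct and is exactly how the cited proof runs.

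The gap is that the pivotal step --- the identity $\div\,(e^{u(R)}X)=e^{u(R)}Q$ with $Q\ge 0$ a sum of squares whose vanishing forces $X=0$ and $D_{ijk}=0$ --- is asserted rather than established, and this identity \emph{is} the theorem. It is not a routine consequence of the ingredients you list: after expanding $\div\,(e^{u(R)}X)$ and substituting the soliton identities, one obtains a quadratic expression in $X$, in the traceless second fundamental form of the level sets of $f$, and in the tangential part of $\nabla R$, whose coefficients depend on $R$, $\psi(R)$ and $u'(R)$; one must verify both that the indefinite cross terms cancel for the particular $u$ displayed in the statement and that the surviving coefficients have the correct sign (this is where the qualitative behaviour of the Bryant profile, e.g.\ positivity of $\psi$, enters). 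Declaring this ``the main computational obstacle'' and moving on leaves the proof without its substance. Two smaller points also need care: (i) $u$ and $\psi$ are defined only on $(0,1)$, while at a critical point of $f$ the normalization $R+|\nabla f|^2=1$ forces $R=1$, so one must check that $e^{u(R)}X$ extends continuously (indeed vanishes) there before applying the divergence theorem on all of $\Omega_l$; (ii) the monotonicity in $l$ of $\int_{\Omega_l}e^{u(R)}Q\,dV_g$ presumes a nested exhaustion, which should be stated. As written, the proposal is a correct outline of Brendle's proof with its core computation missing.
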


As an immediate consequence of Proposition~\ref{prop27} and Proposition~\ref{bre}, we obtain

\begin{cor}  Let $(M^{n},g_{ij}, f)$ ($n\ge 4$) be a complete gradient steady Ricci soliton with positive Ricci curvature such 
that the scalar curvature $R$ approaches zero at infinity. Moreover,  assume that condition~\eqref{asym} in Proposition~\ref{bre} is 
satisfied for some exhaustion of $M^n$ by bounded domains $\Omega_l$. Then $(M^n, g, f)$ is isometric to the Bryant soliton.
\end{cor}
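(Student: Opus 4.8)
The plan is to deduce the statement by simply chaining the two preceding propositions, with only a small compactness observation needed to pass from one to the other. First I would observe that the hypotheses of Proposition~\ref{bre} are met: positivity of the Ricci curvature forces $R=\tr\Ric>0$ everywhere, and this, together with the assumed decay $R\to 0$ at infinity and the assumed validity of~\eqref{asym} for the exhaustion $\{\Omega_l\}$, places us exactly in the setting of Brendle's result. Applying Proposition~\ref{bre} then yields $X=0$ and, crucially for us, $D_{ijk}=0$ on all of $M^n$.

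Next I would verify the one remaining hypothesis needed to invoke Proposition~\ref{prop27}, namely that the scalar curvature $R$ attains its maximum at some interior point. Here completeness together with the positivity and the decay of $R$ suffices: since $R>0$ somewhere we have $M_0:=\sup_{M^n}R>0$, and $M_0<\infty$ because $R\to 0$ at infinity; fixing $\eps\in(0,M_0)$, the superlevel set $\{x\in M^n:R(x)\ge\eps\}$ is closed by continuity of $R$ and bounded because $R\to 0$ at infinity, hence compact and nonempty, so $R$ attains its supremum over this set, and this is the global maximum $M_0$. Thus there is an interior point $O\in M^n$ at which $R$ is maximal.

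Finally, $(M^n,g_{ij},f)$ is now seen to be a complete gradient steady Ricci soliton with $n\ge 4$, positive Ricci curvature, $D_{ijk}=0$, and $R$ attaining an interior maximum, which are precisely the hypotheses of Proposition~\ref{prop27}. That proposition then gives at once that $(M^n,g_{ij},f)$ is isometric to the Bryant soliton up to a scaling factor, which is the claim. I do not expect any genuine analytic obstacle in this argument — it is essentially bookkeeping. The only steps requiring a moment's attention are the maximum-attainment argument above and a check that the dimensional ranges of the two propositions are consistent with $n\ge 4$, which they are: the footnote to Proposition~\ref{bre} extends Brendle's argument to all $n\ge 4$, and Proposition~\ref{prop27} is stated for $n\ge 4$.
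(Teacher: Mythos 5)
Your proposal is correct and follows exactly the route the paper intends: the corollary is stated there as an immediate consequence of Proposition~\ref{bre} (giving $D_{ijk}=0$) and Proposition~\ref{prop27}, and your only added content --- the Hopf--Rinow compactness argument showing that $R>0$ together with $R\to 0$ at infinity forces $R$ to attain an interior maximum --- is precisely the bookkeeping the paper leaves implicit.
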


\begin{rem} By Lemma~\ref{growth},  $f$ is an exhaustion function on $M^n$. 
\end{rem}

Finally, the techniques used in the proof of Theorem~\ref{teo1} can be easily adapted to the case of complete gradient expanding Ricci solitons with nonnegative Ricci curvature 
which are solutions of the equation
\begin{equation}\label{expanding}
R_{ij}+\nabla_i\nabla_jf=-\tfrac{1}{2} \,g_{ij}\,.
\end{equation}
We also normalize the potential function $f$, up to an additive constant, by
\begin{equation}\label{Df}
R+|\nabla f|^2+f=0,
\end{equation}
which is a well-known identity for expanding Ricci solitons (see~\cite{hamilton9}).

The need ingredient is the following lemma, which should be known to experts in the field:

\begin{lemma}\label{growth2} Let $(M^n, g_{ij}, f)$ ($n\ge 3$) be a complete noncompact gradient expanding soliton with nonnegative Ricci curvature $Rc\ge 0$. 
Then, there exist some constants $c_1>0$ and $c_2>0$  such that the potential function $f$ satisfies the estimates
\begin{equation}\label{potential2}
 \frac{1}{4}\big(r(x) -c_1\,\big)^{2}-c_2 \leq -f(x) \leq  \frac{1}{4}\big(r(x) +2\sqrt{-f(O)}\,\big)^{2}\,, 
\end{equation}
where $r(x)$ is the distance function from any fixed base point in $M^n$. In particular, $f$ is a strictly concave exhaustion function achieving its maximum at 
some interior point $O$, which we take as the base point,  and the underlying manifold $M^n$ is diffeomorphic to $\mathbb{R}^n$.
\end{lemma}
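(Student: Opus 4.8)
The plan is to reproduce, in a simplified form, the potential growth estimates known for shrinking solitons (Cao--Zhou) and for steady solitons (Lemma~\ref{growth}): under the hypothesis $Rc\ge 0$ the first--order identity~\eqref{Df} and the expanding soliton equation~\eqref{expanding} yield an upper and a lower bound for $-f$ that are, respectively, the favorable directions of the usual arguments. I would begin by recording two pointwise consequences. Since $Rc\ge 0$ we have $R\ge 0$, so~\eqref{Df} forces $f=-(R+|\nabla f|^{2})\le 0$ and $|\nabla f|^{2}\le -f$; tracing~\eqref{expanding} against an arbitrary unit vector gives $\nabla_i\nabla_j f\le -\tfrac12\,g_{ij}$, i.e. $f$ is strictly concave. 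These are the only structural inputs needed.

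For the upper bound, set $u=-f\ge 0$, so that $|\nabla u|^{2}=|\nabla f|^{2}=u-R\le u$ and hence $|\nabla\sqrt{u}|\le\tfrac12$ wherever $u>0$ (the passage across the zero set of $u$ being handled in the standard way, e.g. by working with $\sqrt{u+\eps}$ and letting $\eps\to 0$). Integrating this estimate along a minimal geodesic from the base point $O$ to $x$ gives $\sqrt{u(x)}\le\sqrt{u(O)}+\tfrac12\,r(x)$, which is precisely $-f(x)\le\tfrac14\big(r(x)+2\sqrt{-f(O)}\big)^{2}$.

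For the lower bound, let $\gamma$ be a unit--speed minimal geodesic from $O$ to $x$ and set $\varphi(s)=f(\gamma(s))$. Strict concavity gives $\varphi''(s)=\nabla_i\nabla_j f\,\dot\gamma^{i}\dot\gamma^{j}\le-\tfrac12$, hence $\varphi(s)\le\varphi(0)+\varphi'(0)\,s-\tfrac14 s^{2}$; since $|\varphi'(0)|\le|\nabla f(O)|\le\sqrt{-f(O)}$, evaluating at $s=r(x)$ and completing the square yields $-f(x)\ge\tfrac14\big(r(x)-2\sqrt{-f(O)}\big)^{2}$, whence the stated bound for appropriate $c_1,c_2>0$. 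This already shows $f(x)\to-\infty$ as $r(x)\to\infty$, so $f$ is a proper exhaustion function and attains its maximum at some interior point, which we relabel $O$; then $\nabla f(O)=0$, and rerunning the argument from this $O$ gives the cleaner inequality $-f(x)\ge -f(O)+\tfrac14 r(x)^{2}$.

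It remains to deduce the diffeomorphism statement. Strict concavity forces $O$ to be the only critical point of $f$: along any minimal geodesic issuing from $O$ the function $\varphi'$ starts at $0$ and is strictly decreasing, so $\langle\nabla f,\dot\gamma\rangle<0$ for $s>0$ and $\nabla f$ never vanishes again; since every point lies on such a geodesic, $O$ is the unique critical point. As $\nabla_i\nabla_j(-f)(O)=\tfrac12 g_{ij}+R_{ij}(O)\ge\tfrac12 g_{ij}>0$, this critical point is nondegenerate, and standard Morse theory --- equivalently, the gradient flow of $-f$, which retracts $M^{n}$ onto $O$ --- shows $M^{n}$ is diffeomorphic to $\mathbb{R}^{n}$, exactly as in Lemma~\ref{growth}. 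I do not anticipate a genuine obstacle: the only mildly delicate points are the crossing of the zero set of $-f$ in the upper bound and the mild bootstrap needed to produce the maximum point (run the lower bound first with an arbitrary base point), both entirely routine, the essential observation being that $Rc\ge 0$ makes the Hessian inequality $\nabla_i\nabla_j f\le -\tfrac12 g_{ij}$ available for free.
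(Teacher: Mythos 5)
Your proof is correct and follows essentially the same route as the paper: the upper bound from $|\nabla(-f)|^{2}\le -f$ integrated as $|\nabla\sqrt{-f}|\le\tfrac12$, and the lower bound from $\nabla_i\nabla_j(-f)\ge\tfrac12 g_{ij}$ along minimizing geodesics. Your direct ODE comparison for the lower bound is the natural simplification of the second-variation-of-arclength argument the paper cites from Cao--Zhou, which is available here precisely because $Rc\ge 0$ makes the Hessian bound pointwise; the remaining Morse-theoretic conclusion is the same in both.
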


 \begin{proof} 
The upper bound follows from~\eqref{Df} and the assumption of $R\ge 0$ which together imply $|\nabla (-f)|^2\le (-f)$.  The lower bound 
is an easy consequence of the second variation of arc length argument as in,  e.g., ~\cite[p.179]{caozhou}, applied to the equation 
$$\nabla_{i}\nabla_j (-f)=R_{ij}+\frac{1}{2}g_{ij}\ge \frac{1}{2}g_{ij}.$$ 
Moreover, since $ |\nabla (-f)| \leq  \sqrt{-f} \leq \frac{1}{2} r(x) +\sqrt{-f(O)}$, $-f$ is clearly proper and hence
an exhaustion function. Therefore $M^n$ is diffeomorphic to $\mathbb{R}^n$.  

\end{proof}

\begin{rem} Clearly, in Lemma~\ref{growth2} and the results below, we can replace the assumption of nonnegative Ricci curvature $Rc\ge 0$ 
by $Rc\ge -(\frac 1 2-\epsilon)g$ for any small $\epsilon >0$. Of course, the normalizing of $f$ and the coefficients in \eqref{potential2} has
to be adjusted accordingly. 
\end{rem}

Taking advantage of this growth estimates on the potential function $f$, it is immediate to deduce the analogous of Lemma~\ref{D0} for expanding solitons, namely

\begin{lemma}\label{E0} Let $(M^{n},g_{ij}, f)$ ($n\ge 3$) be a complete Bach-flat gradient expanding Ricci soliton with nonnegative Ricci curvature. 
Then, $D_{ijk}=0$.
\end{lemma}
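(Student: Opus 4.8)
The plan is to mimic the proof of Lemma~\ref{D0} essentially verbatim, replacing the steady-soliton growth estimate from Lemma~\ref{growth} with the expanding-soliton growth estimate from Lemma~\ref{growth2}, and checking that the weighted integration by parts still closes off. Concretely, since $(M^n,g_{ij},f)$ is Bach-flat, the integral identity
\begin{equation*}
\int_{B_{s}(O)} B_{ij} \nabla_{i} f \nabla_{j} f\, e^{f} \,dV_{g} = -\frac{1}{2} \int_{B_{s}(O)} |D_{ijk}|^{2} e^{f} \, dV_{g} - \frac{1}{n-2} \int_{\partial B_{s}(O)} D_{ijk} \nabla_{i} f \nabla_{j} f\, e^{f} \nu_{k}\, d\sigma
\end{equation*}
holds exactly as in the steady case, because it relies only on~\eqref{BD}, the symmetries~\eqref{SD},~\eqref{SC}, and the soliton structure equation, all of which are available for expanders. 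With $B_{ij}=0$, it remains to show the boundary term vanishes along a suitable sequence $s\to\infty$, and then $D_{ijk}=0$ follows.

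For the boundary estimate, the first step is to bound the integrand $|D_{ijk}\nabla_i f\nabla_j f|$ pointwise. From the definition of $D_{ijk}$ one gets
\begin{equation*}
\big| D_{ijk}\nabla_i f \nabla_j f\big| \le C\big(|R_{ij}| + |R|\big)|\nabla f|^{3}
\end{equation*}
on $\partial B_s(O)$, just as before. Now the key difference: in the expanding case $|\nabla f|$ and $R$ are not bounded — indeed by~\eqref{Df} and Lemma~\ref{growth2} we have $R+|\nabla f|^2=-f\le \frac14(r+2\sqrt{-f(O)})^2$, so $|\nabla f|^2\le -f$ grows quadratically and $R\le -f$ as well, while $Rc\ge 0$ gives $|R_{ij}|\le R$. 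Hence on $\partial B_s(O)$ the integrand is controlled by a polynomial in $r$, say $|D_{ijk}\nabla_i f\nabla_j f|\le C\,r^{5}$ (crudely; a sharper power is available but unnecessary). On the other hand the lower bound in~\eqref{potential2} gives $f(x)\le -\frac14(r(x)-c_1)^2+c_2$, so the weight $e^{f}$ decays like $e^{-r^2/4}$, which crushes any polynomial factor as well as the at-most-Euclidean volume growth of $\partial B_s(O)$ coming from Bishop comparison (valid since $Rc\ge 0$). Therefore the boundary integral tends to $0$ as $s\to\infty$, and likewise the bulk term $\int_{M}|D_{ijk}|^2 e^f\,dV_g$ converges and must equal zero, forcing $D_{ijk}=0$.

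The main obstacle — though it is a mild one — is making sure the Gaussian weight $e^f$ genuinely dominates: one must verify that the product of the polynomial pointwise bound on $|D_{ijk}|\,|\nabla f|^2$, the (Euclidean) volume growth of geodesic spheres, and the decay of $e^f$ is integrable and has vanishing boundary contribution. This is where the quadratic (rather than linear) concavity of $-f$ in the expanding case is both the new feature and exactly what is needed; the steady-case linear lower bound $c_1 r - c_2\le -f$ would not have sufficed against quadratically growing curvature, but the expander's own identity $R+|\nabla f|^2=-f$ ties the growth of the "bad" factors to $-f$ itself, so $e^f$ wins comfortably. All remaining computations — the integration by parts producing the displayed identity, and the termwise estimate of the integrand — are routine and identical in form to those in~\cite{caochen2} and in the proof of Lemma~\ref{D0} above, so I would simply cite that argument and record the modified growth input.
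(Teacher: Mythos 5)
Your proof is correct and follows the same route as the paper, which simply asserts that the argument of Lemma~\ref{D0} goes through once the growth estimate of Lemma~\ref{growth2} is in hand. Your added verification that the quadratic decay of $e^{f}$ (from the lower bound in \eqref{potential2}) dominates the now merely polynomial growth of $(|R_{ij}|+|R|)|\nabla f|^{3}$ via the expander identity $R+|\nabla f|^{2}=-f$, together with the Bishop area bound from $Rc\ge 0$, is exactly the point the paper leaves implicit.
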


Having this at hand, it is sufficient to follow the {\em proof of Theorem~\ref{teo1}} in Section 3 to obtain the rotational symmetry. More precisely, we have

\begin{teo} For $n\geq 4$, let $(M^{n},g_{ij}, f)$ be a complete Bach-flat gradient expanding Ricci soliton with nonnegative Ricci curvature, then it is 
rotationally symmetric. 
\end{teo}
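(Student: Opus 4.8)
The plan is to follow the scheme of the proof of Theorem~\ref{teo1} in Section~3, now using Lemma~\ref{E0} as the replacement for Lemma~\ref{D0} and Lemma~\ref{growth2} as the replacement for Lemma~\ref{growth}. First I would invoke Lemma~\ref{growth2} to conclude that $f$ is a strictly concave exhaustion function with a unique critical point $O$, so that $M^n$ is diffeomorphic to $\mathbb{R}^n$, and then apply Lemma~\ref{E0} to get $D_{ijk}=0$. From this point on, the argument is structurally identical to Claims~1--4 in the proof of Theorem~\ref{teo1}: Proposition~\ref{propCC} gives that $|\nabla f|^2$ and $R$ are constant on level surfaces, that $e_1=\nabla f/|\nabla f|$ is an eigenvector of the Ricci tensor, and that the second fundamental form of $\Sigma_c$ is pure trace; Lemma~\ref{propE} (which is stated for any complete gradient Ricci soliton with $D_{ijk}=0$, hence applies verbatim) gives that each regular level surface is Einstein; and the warped-product computation (using identity (5.3) of~\cite{caochen}) yields that on $M\setminus\{O\}$ the metric has the form $ds^2 = dr^2 + w(r)^2 \bar g_E$ with $\bar g_E$ Einstein.

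The one genuinely new point, and the step I expect to be the main obstacle, is the local analysis near the origin $O$ --- the analogues of Claim~3 and Claim~4 --- where one must show $w(r)/r \to 1$ as $r\to 0$, deduce that the Einstein constant $k=1$, and then compare with the normal-coordinate expansion of $g$ around $O$ to conclude $\bar g_E = \bar g_{\mathbb{S}^{n-1}}$. In the steady case this used $|Rc|\le 1$ (from the normalization $R+|\nabla f|^2=1$) to bound $w''/w$ near $O$. For expanders one instead has, from \eqref{expanding} and Lemma~\ref{growth2}, that $|Rc|$ is bounded on any compact neighborhood of $O$ (indeed $R\ge 0$ and $R=-f-|\nabla f|^2 \le -f$ is locally bounded, and then $|Rc|\le C$ follows since $e_1$ is a Ricci eigenvector with the level surfaces Einstein), which is all that the argument actually requires: plugging the warped-product formulas for $\Ric_g$ and $R_g$ (from~\cite[Proposition~9.106]{besse}, now with the extra $-\tfrac12 g$ term, i.e. $\Ric_g = -\tfrac12 g$ replaced by the soliton identity, but the point is only that $\Ric_g$ is bounded near $O$) forces $w''/w$ bounded as $r\to 0$, whence $w(r)/r\to 1$ and $k=1$. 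The normal-coordinate expansion $g = g_{\mathbb{R}^n} + \mathcal{O}(|x|^2)$ is purely local Riemannian geometry and goes through unchanged, giving $\bar g_E = \bar g_{\mathbb{S}^{n-1}}$.

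Once rotational symmetry $ds^2 = dr^2 + w(r)^2\bar g_{\mathbb{S}^{n-1}}$ on $M^n \cong \mathbb{R}^n$ is established, the theorem is proved as stated --- one need not identify the soliton with any particular model (unlike the steady case, where rotationally symmetric steady solitons on $\mathbb{R}^n$ are unique, the rotationally symmetric expanders with nonnegative Ricci curvature form a one-parameter family, so ``rotationally symmetric'' is the sharp conclusion). I would therefore simply record that the proof of Theorem~\ref{teo1} applies mutatis mutandis, with Lemmas~\ref{growth2} and~\ref{E0} in place of Lemmas~\ref{growth} and~\ref{D0}, and with the boundedness of $|Rc|$ near $O$ (valid on any expanding soliton with $Rc\ge 0$ by the growth estimate above) supplying the ingredient needed for Claim~3.
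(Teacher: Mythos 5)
Your proposal is correct and follows essentially the same route as the paper, which simply states that one follows the proof of Theorem~\ref{teo1} with Lemma~\ref{growth2} and Lemma~\ref{E0} in place of Lemma~\ref{growth} and Lemma~\ref{D0}. Your additional observation that the only normalization-dependent step is the local bound on $|\Ric|$ near $O$ needed for Claim~3, and that local boundedness suffices there, correctly identifies and resolves the one point the paper leaves implicit.
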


For $n=3$, by using Theorem~\ref{teo3D}, we have 
\begin{teo} Let $(M^{3},g_{ij}, f)$ be a three-dimensional complete expanding gradient Ricci solitons with divergence-free Bach tensor and nonnegative 
Ricci curvature. Then $(M^{3},g,f)$ is rotationally symmetric.
\end{teo}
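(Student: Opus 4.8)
The plan is to use Theorem~\ref{teo3D} to reduce to the locally conformally flat case and then to reconstruct the rotationally symmetric metric exactly as in the proof of Theorem~\ref{teo1} (equivalently, as in the preceding theorem for $n\ge 4$). First I would note that $M^3$ must be noncompact: tracing the expanding soliton equation~\eqref{expanding} gives $R+\Delta f=-3/2$, so integrating over a closed $M^3$ would contradict $R\ge 0$ (which holds since $Rc\ge 0$). Hence Lemma~\ref{growth2} applies, so $-f$ is a strictly concave proper exhaustion function with a unique critical point $O$, $M^3$ is diffeomorphic to $\mathbb R^3$, and $-f$ grows quadratically in $r(\cdot)=d(O,\cdot)$, while $Rc\ge 0$ together with the Bishop volume comparison theorem gives at most Euclidean volume growth. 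Next, Theorem~\ref{teo3D} --- which is stated for an arbitrary three-dimensional complete gradient Ricci soliton with $\div B=0$ --- tells us that $(M^3,g,f)$ is either Einstein or locally conformally flat. In the Einstein case, a noncompact three-dimensional Einstein manifold with $Rc\ge 0$ has constant nonnegative sectional curvature, which cannot be positive by Bonnet--Myers; thus $(M^3,g)$ is flat, hence isometric to $\mathbb R^3$, which (carrying the Gaussian potential) is rotationally symmetric.

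It remains to treat the locally conformally flat case, $C_{ijk}=0$. Since the Weyl tensor vanishes identically in dimension three, identity~\eqref{DCW} immediately gives $D_{ijk}=C_{ijk}+W_{ijkl}\nabla_l f=0$. With $D_{ijk}=0$ and the growth and volume estimates above in hand, I would run the argument from the proof of Theorem~\ref{teo1} verbatim: Proposition~\ref{propCC} shows that $|\nabla f|^2$, $R$, the mean curvature $H$ and the Ricci eigenvalues are constant on each regular level surface $\Sigma_r=\{f=\mathrm{const}\}$, and that $\Sigma_r$ is totally umbilic; integrating $\partial_r g_{ab}=-2h_{ab}=\phi(r)g_{ab}$ as in Claim~2 writes $g$ on $M^3\setminus\{O\}$ as a warped product $dr^2+w(r)^2\,\bar g_E$ over a fixed level surface $(\Sigma_1,\bar g_E)$; finally the behaviour at the pole --- $w(r)\to 0$, and $w''/w$ bounded as $r\to 0$ because $|Rc|$ is bounded near $O$, hence $w(r)/r\to 1$ --- forces the normalized curvature constant of $\bar g_E$ to equal $+1$, and comparison with the normal-coordinate expansion of $g$ at $O$ then forces $\bar g_E=\bar g_{\mathbb S^2}$. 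Therefore $g=dr^2+w(r)^2\bar g_{\mathbb S^2}$ on $M^3\setminus\{O\}\cong\mathbb R^3\setminus\{0\}$, i.e. $(M^3,g,f)$ is rotationally symmetric.

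The only real obstacle is that Lemma~\ref{propE}, which supplies Claim~1 in the proof of Theorem~\ref{teo1} (regular level surfaces are Einstein for the induced metric), is stated only for $n\ge 4$: in dimension three a level surface $\Sigma$ is a surface, so being Einstein is vacuous, yet the reconstruction still needs the Gauss curvature of $\Sigma$ to be constant. I would supply this directly from the three-dimensional curvature decomposition: since $W\equiv 0$, the ambient sectional curvature $R_{abab}$ of the plane tangent to $\Sigma$ is a fixed linear combination of $R_{aa}$, $R_{bb}$ and $R$, each of which is constant on $\Sigma$ by Proposition~\ref{propCC}; hence, by the Gauss equation together with Proposition~\ref{propCC}(c), the intrinsic curvature $R^{\Sigma}=R_{abab}+H^2/(n-1)^2$ is constant on $\Sigma$. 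With this substitute for Lemma~\ref{propE}, Claims~2--4 of the proof of Theorem~\ref{teo1} carry over without change, and the argument closes. (Note also that the hypothesis in Theorem~\ref{teo1} that $R$ attain an interior maximum is automatically satisfied here, being part of the conclusion of Lemma~\ref{growth2}.)
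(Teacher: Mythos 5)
Your proposal is correct and follows essentially the route the paper intends (the paper only says ``by using Theorem~\ref{teo3D}''): divergence-free Bach gives local conformal flatness, hence $C_{ijk}=D_{ijk}=0$ since $W\equiv 0$ in dimension three, and then Lemma~\ref{growth2} plus the warped-product reconstruction from the proof of Theorem~\ref{teo1} yields rotational symmetry. Your extra care is well placed: you correctly note that the paper's Lemma~\ref{propE} is vacuous for $n=3$ and supply the needed constancy of the Gauss curvature of the level surfaces directly from Proposition~\ref{propCC} and the three-dimensional curvature decomposition, which is exactly the right substitute.
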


For a discussion of the expanding Ricci solitons which are rotationally symmetric, see~\cite[Chapter~1, Section~5]{chowbookI}, where
the authors provide the existence of solutions with positive Ricci curvature (analogous to the Bryant soliton).


\

\bibliographystyle{amsplain}
\bibliography{bachsteady}

\

\
\end{document}